\begin{document}
\numberwithin{equation}{section}

\def\Label#1{\label{#1}}

\def\1#1{\ov{#1}}
\def\2#1{\widetilde{#1}}
\def\3#1{\mathcal{#1}}
\def\4#1{\widehat{#1}}

\def\s{s}
\def\k{\kappa}
\def\ov{\overline}
\def\span{\text{\rm span}}
\def\tr{\text{\rm tr}}
\def\GL{{\sf GL}}
\def\xo {{x_0}}
\def\Rk{\text{\rm Rk\,}}
\def\sg{\sigma}
\def \emxy{E_{(M,M')}(X,Y)}
\def \semxy{\scrE_{(M,M')}(X,Y)}
\def \jkxy {J^k(X,Y)}
\def \gkxy {G^k(X,Y)}
\def \exy {E(X,Y)}
\def \sexy{\scrE(X,Y)}
\def \hn {holomorphically nondegenerate}
\def\hyp{hypersurface}
\def\prt#1{{\partial \over\partial #1}}
\def\det{{\text{\rm det\,}}}
\def\wob{{w\over B(z)}}
\def\co{\chi_1}
\def\po{p_0}
\def\fb {\bar f}
\def\gb {\bar g}
\def\Fb {\ov F}
\def\Gb {\ov G}
\def\Hb {\ov H}
\def\zb {\bar z}
\def\wb {\bar w}
\def \qb {\bar Q}
\def \t {\tau}
\def\z{\chi}
\def\w{\tau}
\def\Z{\zeta}

\def \T {\theta}
\def \Th {\Theta}
\def \L {\Lambda}
\def\b{\beta}
\def\a{\alpha}
\def\o{\omega}
\def\l{\lambda}
\def \CFT{\text{\rm CFT}}

\def \im{\text{\rm Im }}
\def \re{\text{\rm Re }}
\def \Char{\text{\rm Char }}
\def \supp{\text{\rm supp }}
\def \codim{\text{\rm codim }}
\def \Ht{\text{\rm ht }}
\def \Dt{\text{\rm dt }}
\def \hO{\widehat{\mathcal O}}
\def \cl{\text{\rm cl }}
\def \bR{\mathbb R}
\def \bC{\mathbb C}
\def \bP{\mathbb P}
\def \C{\mathbb C}
\def \bL{\mathbb L}
\def \bZ{\mathbb Z}
\def \bN{\mathbb N}
\def \scrF{\mathcal F}
\def \scrK{\mathcal K}
\def \scrM{\mathcal M}
\def \cR{\mathcal R}
\def \scrJ{\mathcal J}
\def \scrA{\mathcal A}
\def \scrO{\mathcal O}
\def \scrV{\mathcal V}
\def \scrL{\mathcal L}
\def \scrE{\mathcal E}
\def \hol{\text{\rm hol}}
\def \aut{\text{\rm aut}}
\def \Aut{\text{\rm Aut}}
\def \J{\text{\rm Jac}}
\def\jet#1#2{J^{#1}_{#2}}
\def\gp#1{G^{#1}}
\def\gpo{\gp {2k_0}_0}
\def\emmp {\scrF(M,p;M',p')}
\def\rk{\text{\rm rk}}
\def\Orb{\text{\rm Orb\,}}
\def\Exp{\text{\rm Exp\,}}
\def\ess{\text{\rm Ess\,}}
\def\mult{\text{\rm mult\,}}
\def\Jac{\text{\rm Jac\,}}
\def\Span{\text{\rm span\,}}
\def\d{\partial}
\def\D{\3J}
\def\pr{{\rm pr}}
\def\dbl{[\![}
\def\dbr{]\!]}
\def\nl{|\!|}
\def\nr{|\!|}

\def \D{\text{\rm Der}\,}
\def \Rk{\text{\rm Rk}\,}
\def \ima{\text{\rm im}\,}
\def \vfi{\varphi}

\title[CR embeddings of hypersurfaces into hyperquadrics]
{Super-rigidity for CR embeddings of real hypersurfaces into hyperquadrics}
\author[M. S. Baouendi, P. Ebenfelt, X. Huang]
{M. Salah Baouendi, Peter Ebenfelt, Xiaojun Huang}

\address{M. S. Baouendi, P. Ebenfelt: Department of Mathematics, University of
California at San Diego, La Jolla, CA 92093-0112, USA}
\email{sbaouendi@ucsd.edu, pebenfel@math.ucsd.edu}

\address{X. Huang: Department of Mathematics, Rutgers University at New
Brunswick, NJ 08903, USA} \email{huangx@math.rutgers.edu}


\abstract Let $Q^N_l\subset \bC\bP^{N+1}$ denote the standard real,
nondegenerate hyperquadric of signature $l$ and $M\subset \bC^{n+1}$
a real, Levi nondegenerate hypersurface of the same signature $l$.
We shall assume that there is a holomorphic mapping $H_0\colon U\to
\bC\bP^{N_0+1}$, where $U$ is some neighborhood of $M$ in
$\bC^{n+1}$, such that $H_0(M)\subset Q^{N_0}_l$ but
$H(U)\not\subset Q^{N_0}_l$. We show that if $N_0-n<l$ then, for any
$N\geq N_0$, any holomorphic mapping $H\colon U\to \bC\bP^{N+1}$
with $H(M)\subset Q^{N}_l$ and $H(U)\not\subset Q^{N_0}_l$ must be
the standard linear embedding of $Q^{N_0}_l$ into $Q^N_l$ up to
conjugation by automorphisms of $Q^{N_0}_l$ and $Q^N_l$.
\endabstract

\thanks{2000 {\em Mathematics Subject Classification}. 32H02, 32V30}

\thanks{The authors are supported in part by DMS-0701070,
DMS-0701121, and DMS-0500626.}

\newtheorem{Thm}{Theorem}[section]
\newtheorem{Def}[Thm]{Definition}
\newtheorem{Cor}[Thm]{Corollary}
\newtheorem{Pro}[Thm]{Proposition}
\newtheorem{Lem}[Thm]{Lemma}
\newtheorem{Rem}[Thm]{Remark}
\newtheorem{Ex}[Thm]{Example}
\newtheorem{Con}[Thm]{Conjecture}

\maketitle

\section{Introduction}

It was discovered by Poincar\'e  that
a local non-constant holomorphic mapping sending a piece of the
unit sphere $S$ in $\bC^2$ into itself must in fact be a global
holomorphic automorphism of $\bC\bP^2$ preserving $S$.
Almost fifty years later, Alexander \cite{A} completed the Poincar\'e's program along these lines in the equal-dimensional case,
by showing that a continuous non-constant CR map from an open piece of the unit sphere $S$ in $\bC^n$ into  $S$ for any $n\ge 2$
is also a automorphism of $\bC\bP^n$ preserving $S$.

Webster \cite{We79} first obtained a similar rigidity result  for holomorphic mappings (or sufficient smooth CR mappings) sending a
piece of the unit sphere $S^{n}$ in $\bC^{n+1}$ into the unit sphere
$S^{N}$ in a different complex space $\bC^{N+1}$ with $N=n+1\ge 3$. 
Cima-Suffridge in \cite{CS83}  conjectured that
the just mentioned Poincar\'e-Alexander-Webster rigidity property holds for any $C^2$-smooth non-constant CR map,
provided that the codimension $N-n<n$.  This was verified by Faran in \cite{Faran86} when the map is real analytic.
 Forstneric's reflection principle in \cite{Fo86} shows that it  holds when the map is $C^{N-n+1}$-smooth.
 In \cite{Hu1}, this  super rigidity was finally established  for any non-constant $C^2$-smooth CR map.
 It is not clear if one can go below $C^2$-smoothness
to obtain the same result in \cite{Hu1}. However, the  development of inner function theory
demonstrates that, in the sharp contrary to the equi-dimensional case, the  theorem in \cite{Hu1}
does not hold for  general continuous CR mappings.
The bound $N<2n$ is optimal as can be seen by examples such as the so-called Whitney map
(see e.g.\ Example 1.1 in \cite{EHZ1}); the reader is also referred
to \cite{Fa82} and \cite{HJ} \cite{Ha05} and \cite{HJX} for a classification of all rational maps in the case
$n=1,N=2$ or in case  $N\le  3n-4$.

 The situation is quite different in the case of maps between nondegenerate {\it pseudoconcave}
 hyperquadrics.
An immediate benefit from the Lewy extension theorem in this
consideration is that  one needs only to deal with holomorphic maps
instead of more subtle CR maps. More recently, It was  shown in
 \cite{BH} that for such hyperquadrics there is no
restriction on the codimension $N-n$ for the analogous rigidity
phenomenon to hold. In the present paper, we study a more general
situation where the source manifold is not necessarily a
hyperquadric. We consider holomorphic mappings sending a given
Levi-nondegenerate pseudoconcave hypersurface $M$ in $\bC^{n+1}$ into a
nondegenerate hyperquadric of the same signature in $\bC\bP^{N+1}$
and show that if $M$ is sufficiently close to a hyperquadric in a
certain sense,  then any two such mappings differ only by an
automorphism of the hyperquadric (see Theorem \ref{t:mainhyper} for
the precise formulation). Previous results along these lines in the strictly pseudoconvex case include
\cite{We79}, \cite{EHZ1}, and in the general case \cite{EHZ2}. The proof of our main result relies  on the early work in the study  of Pseudo-Hermitian geometry (see \cite {We78} \cite {We79} \cite  {Le88} and the references therein) and, in particular, the more  recent
derivations in \cite{EHZ1} and \cite{EHZ2}.

Let $M\subset \bC^{n+1}$ be a smooth hypersurface and $p\in M$.
Assume that $M$ is Levi nondegenerate at $p$ and
$\mathcal L\colon \bC^n\times \bC^n\to \bC$ a representative of the Levi form of $M$ at $p$.
If we let $e_-$ and $e_+$ be the number of negative and positive eigenvalues of $\mathcal L$,
respectively, then $l(M,p):=\min(e_-,e_+)\leq n/2$ is independent of the choice of
representative $\mathcal L$ of the Levi form. We shall refer to $l(M,p)$ as the signature of
$M$ at $p$.  If $M$ is connected and Levi nondegenerate at every point, then $l:=l(M,p)$ is
constant and we shall say that $M$ has signature $l$.

We let $Q^N_l\subset\bC\bP ^{N+1}$ denote the standard hyperquadric
of signature $0\leq l\leq N/2$ given in homogeneous coordinates
$[z_0\colon z_1\colon\ldots\colon z_{N+1}]$ by
\begin{equation}
\Label{hyperquadric}
-\sum_{j=0}^l|z_j|^2+\sum_{k=l+1}^{N+1}|z_k|^2=0.
\end{equation}
(Thus, the superscript in $Q^N_l$ represents the CR dimension and
the subscript represents  the signature.) We observe that $Q^N_l$ is
a connected hypersurface of CR dimension $N$, which is Levi
nondegenerate at every point. Its signature is $l$. We denote by
$\Aut(Q^N_l)$ the subgroup of biholomorphic mappings of
$\bC\bP^{N+1}$ preserving $Q^N_l$. It is well known \cite{CM} that
$\Aut(Q^N_l)$ can be identified with the group of invertible
$(N+2)\times(N+2)$ matrices that preserve the quadratic form on the
left hand side of \eqref{hyperquadric} (up to sign if $l=N/2$). We
also note that if $2l \le N_0<N$, then the standard linear embedding
$L\colon \bC\bP^{N_0+1}\to \bC\bP^{N+1}$, given by
\begin{equation}
\Label{linear}
L([z_0\colon\ldots\colon z_{N_0+1}]):=[z_0\colon \ldots\colon z_{N_0+1}\colon 0\colon\ldots\colon 0],
\end{equation}
satisfies $L(Q^{N_0}_l)\subset Q^{N}_l$.

To formulate our main result, we shall need one more definition. If $M\subset \bC^{n+1}$ is a real hypersurface, then we shall say that $M$ is {\it locally  biholomorphically equivalent}  to the hyperquadric $Q^{n}_{l}$ at $p\in M$ if there are $p'\in Q^n_l$, open neighborhoods $U\subset \bC^{n+1}$ and $V\subset \bC\bP^{n+1}$ of $p$ and $p'$, respectively, and a biholomorphism $H\colon U \to V$ such that $H(M\cap U)= Q^n_{l}\cap V$ and $H(p)=p'$.
Our main result is the following.

\begin{Thm}\Label{t:mainhyper}
Let $M\subset \bC^{n+1}$ be a connected real-analytic
Levi-nondegenerate hypersurface of signature $l\leq n/2$. Moreover, if $l=n/2$, then  assume that $M$ is not locally biholomorphically equivalent to the hyperquadric $Q^{n}_{n/2}$ at any point of $M$. Suppose
that there is an open connected neighborhood $U$ of $M$ in
$\bC^{n+1}$ and a holomorphic mapping $f_0\colon U\to \bC\bP^{N_0+1}$
with $f_0(M)\subset Q^{N_0}_l$ such that $f_0(U)\not\subset
Q^{N_0}_l$. If $f\colon U\to \bC\bP^{N+1}$ is a holomorphic mapping
with $f(M)\subset Q^{N}_l$, $f(U)\not\subset Q^{N}_l$,  and
$N_0-n<l$, then there is $T\in
\Aut(Q^{N}_l)$ such that $f:=T\circ L\circ f_0$, where $L$
denotes the standard linear embedding given by \eqref{linear}.
\end{Thm}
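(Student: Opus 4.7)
My plan is to combine the rigidity of CR maps between pseudoconcave hyperquadrics from \cite{BH} with a reduction that uses the hypothesized $f_0$ to rigidify $M$ itself. First I would work at a generic point $p\in M$, chosen so that $M$ is Levi nondegenerate at $p$, both $f_0$ and $f$ are defined and of maximal possible rank there, and---when $l=n/2$---$p$ avoids the exceptional locus excluded by the hypothesis that $M$ is nowhere locally biholomorphic to $Q^n_{n/2}$. After choosing Chern-Moser normal coordinates for $M$ at $p$ and normalizing $f_0$ and $f$ by post-composition with elements of $\Aut(Q^{N_0}_l)$ and $\Aut(Q^N_l)$ respectively, both maps take a canonical form adapted to the normal form of $M$.

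Following the derivations in \cite{EHZ1,EHZ2}, the inclusions $f_0(M)\subset Q^{N_0}_l$ and $f(M)\subset Q^N_l$ yield a pair of fundamental identities relating the Taylor coefficients of each map to the Chern-Moser invariants of $M$ at $p$. Since the codimension bound $N_0-n<l$ is strictly smaller than the number of negative eigenvalues of the Levi form, the pseudo-Hermitian argument of \cite{EHZ1}-\cite{EHZ2} forces the CR second fundamental form of $f_0$ at $p$ to vanish. A consequence is that $f_0$ behaves infinitesimally like the standard linear embedding of one hyperquadric into another, and that the Chern-Moser invariants of $M$ at $p$ are, to the relevant order, already determined by $f_0$ itself.

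With this compatibility in hand, I would compare the identity for $f$ against the one obtained from $L\circ f_0$ composed with a candidate $T\in\Aut(Q^N_l)$. The linear freedom in $T$ is used to match the first-order and second-order jets of $f$ and $L\circ f_0$ at $p$. Working order by order in the Taylor expansion, the components of $f$ transverse to the image of $L\circ f_0$ are forced by the identity, together with the compatibility established in the previous step, to coincide with those of $T\circ L\circ f_0$; no free parameters remain. Real-analyticity and connectedness of $U$ then upgrade this jet equality at $p$ to the desired global identity $f=T\circ L\circ f_0$ on all of $U$.

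The main obstacle is this last comparison: although $M$ is not itself a hyperquadric, one must show that its nontrivial Chern-Moser data enter the two identities in a compatible way, so that the order-by-order matching can be pushed through. The inequality $N_0-n<l$ is essential precisely here, because it is only strictly below this threshold that the second fundamental form of $f_0$ is unconditionally forced to vanish, and it is this vanishing that allows one to absorb the higher-order data of $M$ into the comparison and thereby reduce the super-rigidity statement for $f$ to the hyperquadric-to-hyperquadric rigidity of \cite{BH}.
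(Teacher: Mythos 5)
Your proposal contains a step that is not just unproved but actually false: you assert that because $N_0-n<l$, ``the pseudo-Hermitian argument of \cite{EHZ1}--\cite{EHZ2} forces the CR second fundamental form of $f_0$ at $p$ to vanish.'' It does not, and it cannot: by the Gauss equation \eqref{full-gauss}, vanishing of the second fundamental form of $f_0$ would force the Chern--Moser curvature $S_{\a\bar\b\mu\bar\nu}$ of $M$ to be conformally flat, i.e.\ $M$ would be locally biholomorphically equivalent to $Q^n_l$ --- precisely the situation the theorem is designed to go beyond (and which, for $l=n/2$, is explicitly excluded by hypothesis). The actual role of the inequality $N_0-n<l$ is different: one subtracts the Gauss equations of $f_0$ and $f$, obtaining an identity of the form \eqref{sub-gauss3} in which the negative part of the Hermitian sum has only $N_0-n<l$ squares; Lemma \ref{l:main} then forces the conformally flat factor $A$ to vanish, so the two second fundamental forms have \emph{identical} Hermitian products and hence coincide after a unitary change of normal coframe --- they are equal, not zero. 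The same mechanism, iterated with Lemmas \ref{l:codazzi}, \ref{lem-covdercom}, \ref{l:main0} and \ref{l:main}, matches all covariant derivatives (Theorem \ref{t:mainhypertech}). Since your vanishing claim is the pivot on which your order-by-order comparison rests, the comparison as you describe it cannot be carried out.

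You are also missing the geometric mechanism that actually reduces the problem to known rigidity results. From the equality of the two families of covariant derivatives one concludes that $f$ is constantly $(k,s)$-degenerate with $N-s\leq N_0$ (Corollary \ref{c:techhyper}), and then Theorem \ref{thm-main2} places $f(V)$ inside the intersection of $Q^N_l$ with a complex plane of dimension $N_0+1$; after moving that plane section to the standard $Q^{N_0}_l$ by an automorphism, both $S\circ f$ and $L\circ f_0$ map into the \emph{same} quadric $Q^{N_0}_l$, and the comparison is completed by Theorem 1.6 of \cite{EHZ2} (using $2(N_0-n)<2l\leq n$), not directly by \cite{BH}. The result of \cite{BH} enters only at the very end, to replace $S^{-1}\circ L\circ T'$ by $T\circ L$. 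Without the degeneracy/plane-section step, a jet comparison at a single point gives you no control over the components of $f$ transverse to the image of $L\circ f_0$, so ``no free parameters remain'' is not justified.
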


The conclusion of Theorem \ref{t:mainhyper} with
the additional assumption that $M$ is the hyperquadric $Q^n_l$ (and
$N_0=n$, $f_0(z)\equiv z$) is contained in Theorem 1.6 (i) of \cite{BH}. If
the condition $N_0-n<l$ is replaced by $N_0+N<3n$, then the
conclusion of Theorem \ref{t:mainhyper} follows from the work
\cite{EHZ1} (in the strictly pseudoconvex case $l=0$) and
\cite{EHZ2} (in the general case). We conclude the introduction with a number of remarks.

\begin{Rem} {\rm We point out that if $M\subset\bC^{n+1}$ is a merely smooth ($C^\infty$)
connected Levi-nondegenerate hypersurface of signature $l>0$ and
$F\colon M\to Q^N_l\subset\bC\bP^{N+1}$ a smooth CR mapping, then
$F$ is the restriction to $M$ of a holomorphic mapping $f\colon U\to
\bC\bP^{N+1}$, where $U$ is an open neighborhood of $M$ in
$\bC^{n+1}$. Indeed, this follows from a classical result of Lewy
\cite{Lewy} (see also  Theorem 2.6.13 in \cite{Ho}), since the Levi
form of $M$ has eigenvalues of both signs at every point. If, in
addition, $f(U)$ is not contained in $Q^N_l$, then $M$ is
real-analytic. To see this, let $p_0$ be a point on $M$ and $\rho=0$
a real-analytic defining equation for $Q^N_l$ (in some local chart)
near $f(p_0)$. It follows that $M$ is contained, near $p_0$, in the
real-analytic variety $V$ defined by $\rho\circ f=0$. Since
$f(U)\not\subset Q^N_l$, it follows that $\rho\circ f\not\equiv 0$
and hence $V$ is non-trivial. The real-analyticy of $M$ now follows
from a theorem of Malgrange \cite{Mal}. Hence, the conditions in
Theorem \ref{t:mainhyper} that $M$ is real-analytic and $f_0,f$ are
holomorphic can be weakened to $M$ being smooth and $f_0,f$ being CR
with the appropriate conditions on their holomorphic extensions. }
\end{Rem}

\begin{Rem}\Label{lbeq}
{\rm We also remark that if $M\subset \bC^{n+1}$ is a connected
real-analytic Levi-nondegenerate hypersurface of signature $l$ and $M$ is locally
biholomorphically equivalent to the hyperquadric $Q^n_l$ at some point $p\in M$, then $M$ is
locally biholomorphically equivalent to $Q^n_l$ at every point in $M$. Indeed, this follows from
the fact that $M$ is locally biholomorphically equivalent to $Q^n_l$ at $p$ if and only if the
CR curvature of $M$ (see below) vanishes identically in an open neighborhood of $p$ in $M$. The
conclusion above now follows from the real-analyticy of the CR curvature of $M$ and the
connectedness of $M$. Hence, the additional assumption in Theorem \ref{t:mainhyper}
when $l=n/2$ that $M$ is not locally biholomorphically equivalent to  $Q^n_{n/2}$ at any point
of $M$ can be replaced by the seemingly weaker condition that $M$ is not locally
biholomorphically equivalent to  $Q^n_{n/2}$ at one point in $M$.
}
\end{Rem}

\begin{Rem} {\rm If $M$ is locally biholomorphically equivalent to  $Q^n_{n/2}$ at some point
$p\in M$ (and hence at every point of $M$ by Remark \ref{lbeq}), then the conclusion of Theorem
\ref{t:mainhyper} does not hold in general. However, the situation can be reduced to one
considered in \cite{BH} as follows. Under the assumption above, we may take $N_0=n$ in the
statement of Theorem  \ref{t:mainhyper} and, by shrinking $U$ if necessary, we may assume that
$f_0\colon U\to \bC\bP^{n+1}$ is a biholomorphism (onto its image) sending $M$ into $Q^{n}_{n/2}$. Let $f$ be as in the statement of Theorem \ref{t:mainhyper}. By applying Theorem 1.6 in \cite{BH} to the mapping $f\circ f_0^{-1}$, we conclude that $f=T\circ L \circ T_0\circ f_0$, where $T$ and $L$ are as in Theorem \ref{t:mainhyper} and $T_0$ is either the identity in $\bC\bP^{n+1}$ or the flip
\begin{equation}\Label{flip}
[z_0\colon z_1\colon\ldots\colon z_n\colon z_{n+1}]\mapsto
[z_{n+1}\colon z_{n}\colon\ldots z_{1}\colon z_{0}].
\end{equation}
We note that it is not always possible to take $T_0$ to be the identity in this situation.
}
\end{Rem}

\begin{Rem} {\rm If there is an open connected neighborhood $U$ of $M$ in $\bC^{n+1}$ and a
holomorphic mapping $f_0\colon U\to \bC\bP^{N_0+1}$ with
$f_0(M)\subset Q^{N_0}_l$ such that $f_0(U)\not\subset Q^{N_0}_l$,
then necessarily $N_0\geq n$. Indeed, if $N_0<n$, then the rank of
$f_0$ would be $\leq n$ at every point of $M$. Theorem 5.1 in
\cite{BERtrans} would then imply that $f_0(U)\subset Q^{N_0}_l$
contradicting the hypothesis above. }
\end{Rem}

\section{Two basic lemmas}

In this section, we shall formulate two lemmas that are key
ingredients in the proof of Theorem \ref{t:mainhyper}. The first
lemma was proved by  \cite{Hu1} and \cite{EHZ2} ([Lemma 3.2, \cite{Hu1}]). For the reader's convenience, we reproduce its
statement here.

\begin{Lem}\Label{l:main0}
 Let $k,l,n$ be nonnegative integers such $1\leq k<n$. Assume that
$g_1,\ldots, g_k$, $f_1,\ldots, f_k$ are germs at $0\in \bC^n$ of
holomorphic functions such that
\begin{equation}\Label{e:basiceq}
\sum_{i=1}^kg_i(z)\overline{f_j(z)}=
A(z,\bar z)\bigg( -\sum_{i=1}^l|z_i|^2+\sum_{j=l+1}^n|z_j|^2\bigg),
\end{equation}
where $A(z,\zeta)$ is a germ at $0\in \bC^n\times\bC^n$ of a
holomorphic function. Then $A(z,\bar z)\equiv 0$.
\end{Lem}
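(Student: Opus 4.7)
The plan is to polarize \eqref{e:basiceq} and then argue by contradiction via translation invariance of the ``Hermitian tensor rank'' together with a first-order derivative computation.

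Both sides of \eqref{e:basiceq} are real-analytic in $z$, so substituting an independent variable $\zeta$ for $\bar z$ yields the holomorphic identity
\[
\sum_{i=1}^k g_i(z)\,\overline{f_i}(\zeta)\;=\;A(z,\zeta)\,Q(z,\zeta)
\]
on a neighborhood of $0\in\bC^n\times\bC^n$, where $\overline{f_i}(\zeta):=\overline{f_i(\bar\zeta)}$ is holomorphic in $\zeta$ and $Q(z,\zeta):=-\sum_{i\le l}z_i\zeta_i+\sum_{j>l}z_j\zeta_j$ is the bilinearization of the Hermitian form on the right. Assume for contradiction that $A\not\equiv 0$. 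Translating the base point to any $(z_0,\zeta_0)$ via $u=z-z_0$, $v=\zeta-\zeta_0$ does not alter the structure of the left side as a sum of $k$ separable terms $\sum \tilde g_i(u)\tilde h_i(v)$, so for every multi-index $\alpha$ the germ $\partial_u^\alpha\tilde F(0,v)$ lies in the $\bC$-subspace $\tilde V:=\operatorname{span}_\bC(\tilde h_1,\ldots,\tilde h_k)\subset\bC\{v\}$, of dimension at most $k$. My goal is to exhibit $n$ linearly independent such germs, forcing $n\le\dim\tilde V\le k$ and contradicting $k<n$.

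Using the bilinearity of $Q$ together with Leibniz' rule, one computes
\[
\partial_{u_j}\tilde F(0,v)\;=\;\partial_{u_j}\tilde A(0,v)\bigl[Q(z_0,\zeta_0)+Q(z_0,v)\bigr]+\tilde A(0,v)\,\epsilon_j(\zeta_{0,j}+v_j),
\]
where $\epsilon_j\in\{\pm 1\}$ records the signs of $Q$. In the easy case $A(0,\zeta)\not\equiv 0$, set $z_0=0$ and pick $\zeta_0$ with $A(0,\zeta_0)\neq 0$: the bracket vanishes since $Q(0,\cdot)=0$, $\tilde A(0,v)$ is a unit in $\bC\{v\}$, and linear independence of the $n$ germs $\tilde A(0,v)\,\epsilon_j(\zeta_{0,j}+v_j)$ reduces to that of the linear polynomials $\{\zeta_{0,j}+v_j\}_{j=1}^n$, which is immediate. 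The symmetric choice handles $A(z,0)\not\equiv 0$.

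The main obstacle is the case $A(0,\cdot)\equiv 0$ \emph{and} $A(\cdot,0)\equiv 0$, which forces $z_0\neq 0$ and $\zeta_0\neq 0$. Pick $(z_0,\zeta_0)$ on the irreducible variety $\{Q=0\}$ with $A(z_0,\zeta_0)\neq 0$ (such points form an open dense subset when $Q\nmid A$; the exceptional subcase $A=Q^mB$ with $Q\nmid B$ is treated by the same argument applied to $F=Q^{m+1}B$). Then $Q(z_0,\zeta_0)=0$, so the bracket above is the nonzero linear form $Q(z_0,v)$, and after dividing the relation $\sum c_j\partial_{u_j}\tilde F(0,v)=0$ by the unit $\tilde A(0,v)$, the linear polynomial $\sum c_j\epsilon_j(\zeta_{0,j}+v_j)$ is forced to be a scalar multiple $\mu\,Q(z_0,v)$; matching linear parts pins $c_j=\mu z_{0,j}$, and matching the remaining factor yields the formal identity $\sum_j z_{0,j}\,\partial_{z_j}A(z_0,\zeta_0+v)=-A(z_0,\zeta_0+v)$. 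If this held for $(z_0,\zeta_0)$ in an open dense subset of $\{Q=0,\,A\neq 0\}$, analytic continuation would give $(E_zA+A)(z,\zeta)\equiv 0$ with $E_z=\sum_j z_j\partial_{z_j}$; the decomposition $A=\sum_d A^{(d)}$ into $z$-homogeneous parts would then force $(d+1)A^{(d)}\equiv 0$ for every $d\ge 0$, hence $A\equiv 0$, contradicting our assumption. Thus a good $(z_0,\zeta_0)$ exists, the $n$ germs are $\bC$-linearly independent, and the proof is complete.
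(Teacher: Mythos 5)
Your argument is correct, but note that the paper itself contains no proof of Lemma \ref{l:main0}: it only cites Lemma 3.2 of \cite{Hu1} (stated there for $l=0$) and Lemma 2.1 of \cite{EHZ2}, so the comparison is with the cited proof rather than with anything in this text. Both arguments share the same skeleton --- complexify to $\sum_i g_i(z)\overline{f_i}(\zeta)=A(z,\zeta)Q(z,\zeta)$ and play the fact that the $u$-derivatives of the left-hand side at a base point span at most $k<n$ dimensions against $n$ germs extracted from the right-hand side --- but the mechanisms differ. The cited proof is anchored at the origin and works with the Taylor coefficients of $A$ in one group of variables (inducting, in effect, on the order of vanishing); you instead translate the base point to a generic point of the irreducible quadric $\{Q=0\}$ where $A\neq 0$, which linearizes everything in one stroke: a nontrivial dependence among the $\partial_{u_j}\tilde F(0,v)$ forces $c_j=\mu z_{0,j}$ and the Euler identity $E_zA+A=0$ with $E_z=\sum_j z_j\partial_{z_j}$, which kills $A$ degree by degree. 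That is a clean and genuinely different route, and it makes transparent why the signature $l$ plays no role after complexification.

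Two steps are compressed enough that they should be written out. First, the subcase $A=Q^mB$ with $Q\nmid B$ is not literally ``the same argument'': one must redo the first-order computation for $F=Q^{m+1}B$ at a generic point of $\{Q=0\}\cap\{B\neq 0\}\cap\{z\neq 0\}$, divide the dependence relation by $Q(z_0,v)^m$, and observe that the Euler identity becomes $E_zB+(m+1)B=0$, which still forces $B\equiv 0$ since $d+m+1\neq 0$. Second, ``analytic continuation'' off the set $\{Q=0,\,A\neq 0\}$ needs a word of justification: mere vanishing of $E_zA+A$ on the hypersurface $\{Q=0\}$ would only yield $Q\mid(E_zA+A)$. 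What saves you is that your identity holds as a germ in $v$, i.e.\ on a full $\zeta$-neighborhood of each bad base point; hence for every $z_0$ in the $z$-projection of the bad set (which is open, since the projection of $\{Q=0\}\cap\{z\neq 0\}$ is a submersion) the function $\zeta\mapsto(E_zA+A)(z_0,\zeta)$ vanishes on an open set and therefore identically, and only then does the identity theorem give $E_zA+A\equiv 0$ on an open subset of $\bC^{2n}$. With those two points spelled out, the proof is complete.
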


In \cite{Hu1}, Lemma \ref{l:main0} is stated only for $l=0$, but the
proof for $l>0$ is identical (see Lemma 2.1 in \cite{EHZ2}).
 Lemma \ref{l:main0} was also a crucial
tool in the papers \cite{Hu1}, \cite{EHZ1}, \cite{EHZ2}. The second lemma that we shall need is the following.

\begin{Lem}\Label{l:main}
Let $k,l,n$ be nonnegative integers such that
$k<l\leq n/2$. Assume that $g_1,\ldots, g_k,f_1,\ldots, f_m$ are
germs at $0\in \bC^n$ of holomorphic functions such that
\begin{equation}\Label{e:basiceq2}
-\sum_{i=1}^k|g_i(z)|^2+\sum_{j=1}^m|f_j(z)|^2=
A(z,\bar z)\bigg( -\sum_{i=1}^l|z_i|^2+\sum_{j=l+1}^n|z_j|^2\bigg),
\end{equation}
where $A(z,\zeta)$ is a germ at $0\in \bC^n\times\bC^n$ of a
holomorphic function. Then $A(z,\bar z)\equiv 0$.
\end{Lem}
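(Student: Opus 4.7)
The plan is to deduce Lemma \ref{l:main} from Lemma \ref{l:main0} by polarizing the identity and then proceeding by induction on $k$.

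\textbf{Polarization.} First, I would complexify the real-analytic identity by treating $\bar z$ as an independent variable $\zeta$. Setting $\tilde f_j(\zeta):=\overline{f_j(\bar\zeta)}$, $\tilde g_i(\zeta):=\overline{g_i(\bar\zeta)}$ (holomorphic in $\zeta$) and $\tilde Q(z,\zeta):=-\sum_{i=1}^l z_i\zeta_i+\sum_{j=l+1}^n z_j\zeta_j$, the hypothesis becomes
\[
\sum_{j=1}^m f_j(z)\,\tilde f_j(\zeta)-\sum_{i=1}^k g_i(z)\,\tilde g_i(\zeta)=A(z,\zeta)\,\tilde Q(z,\zeta),
\]
an identity of germs of holomorphic functions at $0\in\bC^n\times\bC^n$, and the goal becomes $A\equiv 0$ as such a germ.

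\textbf{Direct application of Lemma \ref{l:main0}.} The left-hand side rewrites as a sum of $k+m$ bilinear terms $\sum_{\alpha=1}^{k+m}G_\alpha(z)\,\tilde F_\alpha(\zeta)$, so Lemma \ref{l:main0} applied in the polarized setting yields $A\equiv 0$ whenever $k+m<n$; since $k<l\leq n/2$, this settles the case $m\leq n-k-1$. The remaining case $m\geq n-k$ requires a genuine reduction.

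\textbf{Induction on $k$.} The base case $k=0$ reduces to $\sum|f_j|^2=A\,Q$; since the left-hand side is non-negative while $Q$ takes both signs (as $l\geq 1$), one has $A\leq 0$ on $\{Q<0\}$ and $A\geq 0$ on $\{Q>0\}$, and hence $A=0$ on the real hypersurface $\{Q=0\}$. A real-analytic division (using that $\tilde Q$ defines a reduced irreducible complex hypersurface passing through the maximally real locus $\zeta=\bar z$) gives $A=Q\cdot B$ for a germ $B$; substituting back produces $\sum|f_j|^2=Q^2\,B$. Matching bidegree-$(2,2)$ jets at the origin and using that the bi-quadratic Hermitian form $Q^2$ has negative signature $l(n-l)\geq 1$ in the basis $\{z_a^2\}\cup\{z_a z_b\}$ of quadratic polynomials forces $B(0,0)=0$, and iterating the order-matching produces $A$ vanishing to infinite order at $0$, hence $A\equiv 0$. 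For the inductive step I would absorb one of the $g_i$ terms by a substitution in $(z,\zeta)$ that exploits the extra negative room in $\tilde Q$ (available because $l>k$), producing an identity with $k-1$ negative terms on the left, a signature-$(l-1)$ hyperquadric on the right, and ambient dimension $n-2$, to which the inductive hypothesis applies (since $k-1<l-1$ and $l-1\leq(n-2)/2$ thanks to $l\leq n/2$).

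\textbf{Main obstacle.} The principal difficulty is the inductive step: carrying out the absorption while keeping the new identity in exactly the standardized form required by Lemma \ref{l:main} (standard coordinates, standard signature-$(l-1)$ hyperquadric, and preserved strict inequality $k-1<l-1$). The hypothesis $l\leq n/2$ enters essentially here, ensuring that after trading a negative direction of $\tilde Q$ for the absorbed $g_i$, enough positive directions of $\tilde Q$ remain so that the inductive hypothesis continues to be applicable.
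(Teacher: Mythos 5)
Your polarization step and the observation that Lemma \ref{l:main0} already disposes of the case $k+m<n$ are both correct, but the argument has a genuine gap exactly where you flag the ``principal difficulty'': the inductive step on $k$ is never carried out, and it is where all of the content of the lemma lies. (For reference, the paper does not prove Lemma \ref{l:main} either; it cites Lemma 4.1 together with Lemma 2.1 of \cite{BH}, or alternatively Theorem 5.7 of \cite{BERtrans}.) I do not believe the proposed absorption can be made to work in the form you describe. The natural substitutions that trade away a negative direction of $\tilde Q$ are restrictions to null directions of $Q$ (e.g.\ $z_{l+1}=cz_1$ with $|c|=1$); these do reduce the quadric to signature $l-1$ in $n-2$ variables, but they do \emph{not} reduce the number $k$ of negative terms on the left-hand side, so the inductive hypothesis would require $k<l-1$, which you do not have. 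Conversely, there is no substitution in $(z,\zeta)$ that converts the term $-|g_1(z)|^2$, with $g_1$ an arbitrary holomorphic germ (possibly $\equiv 0$, possibly nonvanishing at the origin), into a coordinate term $-|z_i|^2$ of a standard quadric in fewer variables. The hypothesis $k<l$ has to enter through the geometry of the null cone of $Q$: since $l\leq n/2$, the cone $\{Q=0\}$ contains complex $l$-planes $\Lambda$ through $0$ (e.g.\ $z_{l+i}=z_i$ for $i\leq l$, $z_j=0$ for $j>2l$), and restricting \eqref{e:basiceq2} to such a plane gives $\sum_j|f_j|^2=\sum_{i=1}^k|g_i|^2$ on $\Lambda\cong\bC^l$, which by polarization forces each $f_j|_\Lambda$ to be a linear combination of the $g_i|_\Lambda$ and hence forces a rank drop to at most $k<l$. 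It is a null-space/rank mechanism of this kind, combined with a linear-independence count as in the proof of Lemma \ref{l:main0}, that the cited arguments exploit; your induction never produces such a mechanism.

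A secondary problem is the base case $k=0$. The step ``iterating the order-matching produces $A$ vanishing to infinite order'' is circular: after one division you have $\sum_j|f_j|^2=Q^2B$, and comparing bidegree-$(d,d)$ components yields $\sum_j|f_j^{(d)}|^2=Q^2B^{(d-2,d-2)}$, which is an identity of exactly the same shape as the one you started from (with $A$ replaced by $QB^{(d-2,d-2)}$), so nothing terminates. Fortunately the base case admits a one-line proof: since $l\geq 1$, the real cone $\{Q(z,\bar z)=0\}$ is a nonempty real hypersurface through $0$ on which $\sum_j|f_j|^2=0$; a holomorphic germ vanishing on a germ of a smooth real hypersurface of $\bC^n$ vanishes identically, so all $f_j\equiv 0$ and hence $A\equiv 0$. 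This also makes plain that the entire difficulty of the lemma sits in the case $k\geq 1$, i.e.\ precisely in the step your proposal leaves open.
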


The proof of Lemma \ref{l:main} can be found in Lemma 4.1 of
\cite{BH} (with $\ell'=\ell$ and after a direct application of Lemma
2.1 of \cite{BH}). The lemma also follows in a straightforward way
from Theorem 5.7 in the subsequent work \cite{BERtrans}.

\section{Preliminaries}\Label{s:prelim}

We shall use the set-up and notation of \cite{EHZ1}. The reader is
referred to that paper for the terminology used below and a brief
introduction to the pseudohermitian geometry   and
the CR pseudoconformal geometry. (The reader is of course
also referred to the original papers by Chern and Moser \cite{CM},
Webster \cite{We78}, and Tanaka \cite{Ta75}.) Although the main
focus of \cite{EHZ1} is on strictly pseudoconvex hypersurfaces, many
of the results obtained in that paper work equally well for Levi-nondegenerate hypersurfaces and we shall use those results in this
paper. Thus, let $M$ be a Levi-nondegenerate CR-manifold of
dimension $2n+1$, with rank $n$ CR bundle $\mathcal V$, and
signature $l\leq n/2$. Near a distinguished point $p_0\in M$, we let
$\theta$ be a contact form and $T$ its characteristic (or Reeb)
vector field, i.e.\ the unique real vector field that satisfies
$$
T\lrcorner d\theta=0,\quad \big<\theta,T\big >=1.
$$
We complete $\theta$ to an admissible coframe
$(\theta,\theta^1,\ldots,\theta^n)$ for the bundle $T'M$ of
$(1,0)$-cotangent vectors (i.e. the cotangent vectors that annihilate $\mathcal V$). Recall that the coframe is called
admissible if $\big<\theta^\alpha,T\big>=0$, for $\alpha=1,\ldots,
n$. We choose a frame $L_1,\ldots L_n$ for the bundle $\bar
{\mathcal V}$, or, as we shall also refer to it, the bundle of
$(1,0)$-tangent vectors $T^{1,0}M$. The frame for $T^{1,0}M$ will be chosen such
that $(T,L_1,\ldots, L_n, L_{\bar 1},\ldots L_{\bar n})$ is a frame
for $\bC TM$, near $p_0$, which is dual to the coframe
$(\theta,\theta^1,\ldots, \theta^n,\theta^{\bar
1},\ldots,\theta^{\bar n})$. Here and in what follows,
$L_{\bar\alpha}=\overline{L_\alpha}$, $\theta^{\bar\alpha}=\overline{\theta^\alpha}$, etc. We shall denote the
matrix representing the Levi form (relative to the frame $L_1,\ldots
L_n$) by $(g_{\alpha\bar\beta})$, where $\alpha,\beta=1,\ldots, n$.
We may assume that  $g_{\alpha\bar\beta}$ is constant, in fact
that it is diagonal with diagonal elements $-1,\ldots, -1$ ($l$
times) and $1,\ldots, 1$ ($n-l$ times), although this fact will not be explicitly used most of the time. We denote by $\nabla$ the
Webster--Tanaka pseudohermitian connection on $\bar {\mathcal V}$,
which is expressed relative to the chosen frame and coframe
 by
\begin{equation}\Label{eq-con}
\nabla L_\a:={\o_\a}^\b\otimes L_\b,
\end{equation}
where the $1$-forms ${\o_\a}^\b$ on $M$ are uniquely determined by
the conditions
\begin{equation}\Label{eq-consymmetry}
\begin{aligned}
d\theta^\b&=\theta^\a\wedge {\o_\a}^\b \mod \theta\wedge\theta^{\bar\a},\\
dg_{\a\bar\b}&=\o_{\a\bar\b}+ \o_{\bar\b\a}.
\end{aligned}
\end{equation}
Here and for the remainder of this paper, we use the summation
convention that an index that appears both as a subscript and
superscript is summed over. We also use the Levi form to raise and
lower indices in the usual way. The first condition in
\eqref{eq-consymmetry} can be rewritten as
\begin{equation}\Label{eq-conmatrix}
d\theta^\b=\theta^\a\wedge {\o_\a}^\b + \theta\wedge\tau^\b, \quad
\tau^\b = A^\b{}_{\bar\nu}\theta^{\bar\nu}, \quad A^{\a\b} =
A^{\b\a}
\end{equation}
for a suitable uniquely determined torsion matrix $(A^\b{}_{\bar\a})$,
where the last symmetry relation holds automatically (see
\cite{We78}). For future reference, we record here also the fact
that the coframe $(\theta,\theta^1,\ldots,\theta^n)$ is admissible
if and only if
\begin{equation}\Label{Levi}
d\theta=ig_{\alpha\bar\beta}\theta^\alpha\wedge\theta^{\bar\beta}.
\end{equation}

Now, let $\hat M$ be a Levi-nondegenerate CR-manifold of dimension
$2\hat n+1$, with rank $\hat n$ CR bundle $\hat{\mathcal V}$
($=\overline{T^{1,0}\hat M}$), and signature $\hat l \leq \hat n/2$.
Let $f\colon M\to \hat M$ be a smooth CR mapping. Our arguments in the sequel will be of a local nature and we shall restrict our attention to a small open neighborhood  of $p_0$ (that we still shall refer to as $M$). We shall use a $\hat{}$ to denote
objects associated to $\hat M$. Capital Latin indices $A, B,$ etc,
will run over the set $\{1,2,\ldots,{\hat n}\}$ whereas Greek
indices $\a,\b$, etc, will run over $\{1,2,\ldots, n\}$ as above.
Moreover, we shall let small Latin indices $a,b,$ etc, run over the
complementary set $\{n+1,n+2,\ldots, {\hat n}\}$.
Recall that $f\colon M\to \hat M$ is a CR mapping if
\begin{equation}\Label{CR}
f^*(\hat\theta)=a\theta,\quad f^*(\hat\theta^A)=E^A{}_\alpha\theta^\alpha+E^A\theta,
\end{equation}
where $a$ is a real-valued function and $E^A{}_\alpha$, $E^A$ are complex-valued functions defined near $p_0$. We shall assume that $f$ is {\it CR
transversal} to $\hat M$ at $p_0\in M$, which in the present context
can be expressed by saying that $a(p_0)\neq 0$, where $a$ is the function in \eqref{CR}. Without loss of generality, we may assume that $a\equiv 1$ (i.e.\ we take $\theta=f^*(\hat\theta)$ in our admissible coframe $(\theta,\theta^\alpha)$). We note that the CR transversality of $f$ implies that $n\leq \hat n$. Indeed, it follows easily from \eqref{Levi} and \eqref{CR} that
\begin{equation}\Label{Levi-id}
g_{\alpha\bar\beta}=\hat g_{A\bar B} E^A_\alpha E^{\bar B}_{\beta}.
\end{equation}
Since the rank of the matrices $(g_{\alpha\bar\beta})$ and $(\hat
g_{A\bar B})$ are $n$ and $\hat n$, respectively, we conclude that
$n\leq \hat n$ and the rank of the matrix $(E^A{}_{\alpha})$ is $n$.
Hence, if $f$ is CR tranversal to $\hat M$, it also follows that $f$
is an embedding, locally near $p_0$.  We may assume, without loss of
generality (by renumbering the $\hat\theta^A$ if necessary), that
the admissible coframe $(\hat\theta,\hat\theta^A)$ on $\hat M$ is
such that the pullback
$(\theta,\theta^\alpha):=(f^*(\hat\theta),f^*(\hat\theta^\a))$ is a
coframe for $M$. Assume that
$(\theta,\theta^\alpha)$ defined in this way is also admissible.
Hence, we shall drop the $\hat{}$ over the frames and coframes if
there is no ambiguity. It will be clear from the context if a form
is pulled back to $M$ or not. Under the assumptions above, we shall
identify $M$ with the submanifold $f(M)$ of $\hat M$ and write
$M\subset\hat M$. Then $T^{1,0}M$ becomes a rank $n$ subbundle of
$T^{1,0}\hat M$ along $M$. It follows that the (real) codimension of
$M$ in $\hat M$ is $2({\hat n}-n)$ and that there is a rank $({\hat
n}-n)$ subbundle $N'M$ of $T'\hat M$ along $M$ consisting of
$1$-forms on $\hat M$ whose pullbacks to $M$ (under $f$) vanish. We
shall call $N'M$ the {\em holomorphic conormal bundle of $M$} in
$\hat M$. We shall say that the pseudohermitian structure $(\hat
M,\hat\theta)$ (or simply $\hat\theta$) is {\em admissible for the
pair $(M,\hat M)$} if the characteristic vector field $\hat T$ of
$\hat \theta$ is tangent to $M$ (and hence its restriction to $M$
coincides with the characteristic vector field $T$ of $\theta$). If
the admissible coframe $(\hat\theta,\hat\theta^A)$ on $\hat M$ is
such that $(\theta,\theta^\alpha)$, with $\theta:=f^*(\hat\theta)$,
$\theta^\alpha:=f^*(\hat\theta^\alpha)$, is an admissible coframe on
$M$ and $f^*(\hat\theta^a)=0$, then $(\hat M,\hat\theta)$ is
admissible for the pair $(M,\hat M)$.

It is easily seen that not all contact forms $\hat\theta$ are
admissible for $(M,\hat M)$. However, Lemma 4.1 in \cite{EHZ1}
(which, though stated only for strictly pseudoconvex CR-manifolds,
holds also for general Levi nondegenerate CR-manifolds) asserts that
any contact form $\theta$ on $M$ can be extended to a contact form
$\hat \theta$ in a neighborhood of $M$ in $\hat M$ such that $\hat
\theta$ is admissible for $(M,\hat M)$. Let us fix a contact form
$\theta$ on $M$, extend it to an admissible contact form
$\hat\theta$ for the pair $(M,\hat M)$. We denote by $\hat T$ the
characteristic vector field of $\hat \theta$ and by $T$ its
restriction to $M$. Recall that $T^{1,0}M$ is a rank $n$ subbundle
of the rank $\hat n$ bundle $T^{1,0}\hat M$. The Levi form of $M$ at
a point $p\in M\subset \hat M$ is given, under these
identifications, by the restriction of the Levi form of $\hat M$ to
the subspace $T^{1,0}_pM\subset T^{1,0}_p\hat M$ (and, hence, in
particular, $\hat l\geq l$). If we let $(L_\alpha)$ be a frame for
$T^{1,0}M$ such that the Levi form $g_{\alpha\bar\beta}$ of $M$ is
constant and diagonal with $-1,\ldots,-1$ ($l$ times) and $1,\ldots,
1$ ($n-l$ times) on the diagonal, then we may complete $(L_\alpha)$
to a frame $(\hat L_A)=(L_\alpha, \hat L_a)$ for $T^{1,0}\hat M$
along $M$ such that the Levi form $\hat g_{A\bar B}$ of $\hat M$
along $M$ is constant and diagonal with diagonal elements
$-1,\ldots,-1$ ($l$ times), $1,\ldots, 1$ ($n-l$ times), $-1,\ldots,
-1$ ($\hat l-l$ times) and $1,\ldots, 1$ ($\hat n-n-\hat l+l$
times). Finally, we extend the $\hat L_A$ to a neighborhood of $M$
such that the Levi form of $\hat M$ stays constant. If we now let
$(\hat\theta,\hat\theta^A,\hat\theta^{\bar A})$ be the dual coframe
of $(\hat T, \hat L_A,\hat L_{\bar A})$, then clearly the coframe
$(\hat\theta,\hat\theta^A)$ for $T'\hat M$ is admissible, its
pullback to $M$ equals $(\theta,\theta^\alpha,0)$ and
$(\theta,\theta^\alpha)$ is an admissible coframe for $T'M$.
 In
other words, we have obtained the following result, in whose
formulation we have taken a little more care to distinguish between
$M$ and its image $f(M)$ in $\hat M$. A similar result was obtained
in \cite{EHZ1} (Corollary 4.2) for strictly pseudoconvex
hypersurfaces.

\begin{Pro}\Label{thm-admada}
Let $M$ and $\hat M$ be Levi-nondegenererate CR-manifolds of
dimensions $2n+1$ and $2{\hat n}+1$, and signatures $l\leq n/2$ and
$\hat l\leq \hat n/2$, respectively. Let $f\colon M\to \hat M$ be a
CR mapping that is CR transversal to $\hat M$ along $M$. If
$(\theta,\theta^\a)$ is any admissible coframe on $M$, then in a
neighborhood of any point $\hat p\in f(M)$ in $\hat M$ there exists
an admissible coframe $(\hat \theta,\hat \theta^A)$ on $\hat M$ with
$f^*(\hat\theta,\hat \theta^\a,\hat\theta^a)=(\theta,\theta^\a,0)$.
In particular, $\hat\theta$ is admissible for the pair $(f(M),\hat
M)$, i.e.\ the characteristic vector field $\hat T$ is tangent to
$f(M)$. If the Levi form of $M$ with respect to $(\theta,\theta^\a)$
is constant and diagonal with $-1,\ldots,-1$ ($l$ times) and
$1,\ldots, 1$ ($n-l$ times) on the diagonal, then $(\hat \theta,\hat
\theta^A)$ can be chosen such that the Levi form of $\hat M$
relative to this coframe is constant and diagonal with diagonal elements
$-1,\ldots,-1$ ($l$ times), $1,\ldots, 1$ ($n-l$ times), $-1,\ldots,
-1$ ($\hat l-l$ times) and $1,\ldots, 1$ ($\hat n-n-\hat l+l$ times).
 With this additional property, the coframe $(\hat \theta,\hat
\theta^A)$ is uniquely determined along $M$ up to unitary
transformations in $U(n,l)\times U(\hat n-n, \hat l-l)$.
\end{Pro}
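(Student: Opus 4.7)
The plan is to follow, step by step, the construction already sketched in the paragraph preceding the proposition. The existence claim splits naturally into three parts: an extension of the contact form from $M$ to $\hat M$, a pointwise linear-algebra choice along $f(M)$ that produces the holomorphic frame in standard diagonal form, and a smooth extension of this frame off $f(M)$. The pullback identities and admissibility then drop out of the dual frame, and uniqueness is again pointwise linear algebra.

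First, I would extend $\theta$ from $M$ to a contact form $\hat\theta$ on a neighborhood of $f(M)$ in $\hat M$ which is admissible for the pair $(f(M),\hat M)$, in the sense that the characteristic vector field $\hat T$ of $\hat\theta$ is tangent to $f(M)$ and its restriction coincides with $T$. This is exactly the content of Lemma 4.1 of \cite{EHZ1}; its proof is local and uses only CR transversality together with the existence of admissible contact forms on $M$ and $\hat M$, so it carries over verbatim to the Levi-nondegenerate setting.

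The main step is the construction of the frame $(\hat L_A)=(L_\alpha,\hat L_a)$. Let $L_\alpha$ be the frame on $M$ dual to $(\theta,\theta^\alpha)$; these vectors lie in $T^{1,0}M\subset T^{1,0}\hat M|_{f(M)}$ and, by hypothesis, diagonalize the Levi form of $M$ in the prescribed way. The task is to complete $L_\alpha$ at each point $p\in f(M)$ to a basis of $T^{1,0}_p\hat M$ by vectors $\hat L_a$ which are $\hat g$-orthogonal to every $L_\alpha$ and for which $\hat g(\hat L_a,\overline{\hat L_b})$ is the prescribed diagonal matrix. The orthogonality condition picks out the $\hat g$-orthogonal complement $W_p$ of $T^{1,0}_p M$ in $T^{1,0}_p\hat M$, which is a genuine complement because $\hat g|_{T^{1,0}_p M}$ is nondegenerate of signature $(n-l,l)$. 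By Sylvester's law of inertia, $\hat g|_{W_p}$ then has signature $(\hat n-n-\hat l+l,\hat l-l)$, so a diagonalizing basis $\hat L_a$ of $W_p$ exists pointwise; a Gram--Schmidt-type orthonormalization in a chart produces a smooth such choice along $f(M)$, and one then extends the $\hat L_a$ to a neighborhood of $f(M)$ so that the Levi form stays constant and diagonal.

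With the frame $(\hat T,\hat L_A,\hat L_{\bar A})$ in hand, take its dual coframe $(\hat\theta,\hat\theta^A,\hat\theta^{\bar A})$. Admissibility of $(\hat\theta,\hat\theta^A)$ is immediate from the duality identity $\langle\hat\theta^A,\hat T\rangle=0$. The pullback assertions follow from duality as well: $f^*\hat\theta=\theta$ by the choice of $\hat\theta$; $f^*\hat\theta^\alpha=\theta^\alpha$ since $\hat\theta^\alpha$ is dual to $L_\alpha\in T^{1,0}M$ within the frame; and $f^*\hat\theta^a=0$ since $\hat\theta^a$ vanishes on every $L_\alpha$, $L_{\bar\alpha}$ and on $T$, hence on the whole of $\bC Tf(M)$. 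For uniqueness along $M$, any other frame with the same properties differs pointwise by an invertible change of basis that preserves both the block decomposition $T^{1,0}_p M\oplus W_p$ (a mixing would produce nontrivial cross entries in $\hat g$) and the diagonal form of $\hat g$ on each block, leaving exactly $U(n,l)\times U(\hat n-n,\hat l-l)$ as the residual freedom. The principal obstacle is the existence step for $\hat L_a$: the Sylvester signature computation on $W_p$ together with a smooth diagonalization of $\hat g|_W$ over $f(M)$; all other steps are either formal or already packaged in \cite{EHZ1}.
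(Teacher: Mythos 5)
Your proposal is correct and follows essentially the same route as the paper: the paper's own justification is the paragraph preceding the proposition, which likewise invokes Lemma 4.1 of \cite{EHZ1} to get an admissible extension $\hat\theta$, completes $(L_\alpha)$ to a frame $(L_\alpha,\hat L_a)$ diagonalizing the Levi form of $\hat M$ along $f(M)$ using that the Levi form of $M$ is the restriction of that of $\hat M$, extends off $f(M)$ keeping the Levi form constant, and dualizes. Your added detail (the Sylvester signature count on the orthogonal complement and the explicit uniqueness argument) only makes explicit what the paper leaves implicit.
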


Let us fix an admissible coframe $(\theta,\theta^\a)$ on $M$ and let
$(\hat \theta,\hat \theta^A)$ be an admissible coframe on $\hat M$
near a point $\hat p\in f(M)$. We shall say that $(\hat \theta,\hat \theta^A)$ is {\em
adapted} to $(\theta,\theta^\a)$ on $M$ (or simply to $M$ if the
coframe on $M$ is understood) if it satisfies the conclusion of
Proposition~{\rm\ref{thm-admada}} with the requirement there for the
Levi form. For convenience of notation though, we continue to denote
the Levi forms by $g_{\alpha\bar\beta}$ and $\hat g_{A\bar B}$.

For ease of notation, we shall write $(\theta,\theta^A)$ for the coframe $(\hat \theta,\hat \theta^A)$.
The fact that $(\theta,\theta^A)$  is
adapted to $M$ implies, in view of (\ref{eq-conmatrix}), that if the
pseudohermitian connection matrix of $(\hat M,\hat{\theta})$ is
$\hat\omega_B{}^A$, then that of $(M,\theta)$ is (the pullback of)
$\hat\omega_\b{}^\a$. Similarly, the pulled back torsion
$\hat\tau^\a$ is $\tau^\a$. Hence omitting a $\hat{}$ over these
pullbacks will not cause any ambiguity and we shall do it in the
sequel.
By our normalization of the Levi form, the second equation in
(\ref{eq-consymmetry}) reduces to
\begin{equation}\Label{eq-consym2}
 \omega_{B\bar A}+ \omega_{\bar AB}=0,
\end{equation}
where as before $\o_{\bar AB}=\1{ \o_{A\bar B}}$.

The matrix of $1$-forms $({\omega_\a}^b)$ pulled back to $M$ defines
the {\it second fundamental form} of $M$ (or more precisely of the
embedding $f$). Since $\theta^b$ is 0 on $M$, we deduce by using
equation (\ref{eq-conmatrix}) that, on $M$,
\begin{equation}
{{\omega_\a}^b}\wedge \theta^\a + \tau^b\wedge\theta=0,
\end{equation} which
implies that
\begin{equation}\Label{eq-secform}
{\omega_\a}^b={{\omega_\a}^b}_\b\,\theta^\b, \quad
\omega_\a{}^b{}_\b=\omega_\b{}^b{}_\a, \quad \tau^b=0.
\end{equation}
As in \cite{EHZ1}, we identify the CR-normal space $T^{1,0}_p\hat
M/T^{1,0}_pM$ with $\bC^{\hat n-n}$ by letting the equivalence
classes of the $L_a$ form a basis in the former space. We consider
the components of the second fundamental form
$({{\omega_\alpha}^a}_\beta)_{a=n+1,\ldots,\hat
n}={{\omega_\alpha}^a}_\beta L_a$, for $\alpha,\beta=1,\ldots,n$, as
vectors in the CR-normal space $\cong\bC^{\hat n-n}$. We also view
the second fundamental form ${{\omega_\alpha}^a}_\beta$ as a section
over $M$ of the vector bundle of $\C$-bilinear maps
$$T^{1,0}_pM\times T^{1,0}_pM\to T^{1,0}_p\hat M/T^{1,0}_pM, \quad p\in M.$$
For sections of this bundle we have the covariant differential
induced by the pseudohermitian connections $\nabla$ and $\hat\nabla$
on $M$ and $\hat M$ respectively:
\begin{equation}\Label{o-der}
\nabla\omega_{\a}{}^a{}_\b =d\omega_\a{}^a{}_\b
-\omega_\mu{}^a{}_\b\,\o_\a{}^\mu +\omega_\a{}^b{}_\b\,\o_b{}^a-
\omega_\a{}^a{}_\mu\,\o_\b{}^\mu.
\end{equation}
We use e.g.\ $\o_\a{}^a{}_{\b;\gamma}$ to denote its component in
the direction $\theta^\gamma$. Higher order covariant derivatives
$\o_\a{}^a{}_{\b;\gamma_1,\ldots,\gamma_l}$ are defined inductively
in a similar way:
\begin{equation}\Label{eq-cov1}
\nabla\omega_{\gamma_1}{}^a{}_{\gamma_2;\gamma_3\ldots\gamma_j}=
d\omega_{\gamma_1}{}^a{}_{\gamma_2;\gamma_3\ldots\gamma_j}+
\omega_{\gamma_1}{}^b{}_{\gamma_2;\gamma_3\ldots\gamma_j} \,
\o_b{}^a -\sum_{l=1}^j
\omega_{\gamma_1}{}^a{}_{\gamma_2;\gamma_3\ldots\gamma_{l-1}\,\mu\,
\gamma_{l+1}\ldots\gamma_j} \,\o_{\gamma_l}{}^\mu.
\end{equation}
 As above, we also consider the covariant derivatives as vectors in
$\bC^{\hat n-n}\cong T^{1,0}_p\hat M/T^{1,0}_pM$ via the identification
$$
(\omega_{\gamma_1}{}^a{}_{\gamma_2;\gamma_3\ldots\gamma_j})_{a=n+1,\ldots,\hat
n}=\omega_{\gamma_1}{}^a{}_{\gamma_2;\gamma_3\ldots\gamma_j}L_a.
$$
We define an increasing sequence of vector spaces
$$E_2(p)\subset\ldots \subset E_k(p)\subset\ldots\subset \bC^{\hat n-n}\cong
T^{1,0}_p\hat M/T^{1,0}_pM$$ by letting $E_k(p)$ be the span of the
vectors
$$(\omega_{\gamma_1}{}^a{}_{\gamma_2;\gamma_3\ldots\gamma_j})_{a=n+1,\ldots,\hat
n},\quad \forall\, 2\leq j\leq k,\ \gamma_i\in\{1,\ldots,n\},
$$
evaluated at $p\in M$. We shall say that the mapping $f\colon M\to
\hat M$ is {\it constantly $(k,s)$-degenerate} at $p$ (following Lamel
\cite{Lamel}, see \cite{EHZ1}) if the vector space $E_k(q)$ has
constant dimension $\hat n-n-s$ for $q$ in an open neighborhood of $p$,
$E_{k+1}(q)= E_k(q)$, and $k$ is the smallest integer with
this property.

\section{The second fundamental form, covariant derivatives, and the Gauss equation}\Label{s:SFF}

For the proof of our main results, we need to recall some further
results and terminology from \cite{EHZ1}. We keep the notation from
the previous section. A tensor
$T_{\a_1\ldots\a_r\bar\b_1\ldots\bar\b_s}{}^{a_1\ldots a_t\bar
b_1\ldots\bar b_q}$, with $r,s\geq1$, is called  {\em conformally flat} if it is a
linear combination of $g_{\a_i\bar\b_j}$ for $i=1,\ldots,r$,
$j=1,\ldots,s$, i.e.
\begin{equation}
T_{\a_1\ldots\a_r\bar\b_1\ldots\bar\b_s}{}^{a_1\ldots a_t\bar
b_1\ldots\bar b_q}=\sum_{i=1}^r\sum_{j=1}^s g_{\alpha_i\bar\beta_j}
(T_{ij})_{\a_1\ldots\widehat{\a_i}\ldots
\a_r\bar\b_1\ldots\widehat{\bar\b_j}\ldots\ldots\bar\b_s}{}^{a_1\ldots
a_t\bar b_1\ldots\bar b_q},
\end{equation}
where e.g.\ $\widehat{\a}$ means omission of that factor. (A similar definition can be made for tensors with different
orderings of indices.) The following observation gives a motivation for this definition. Let $T_{\a_1\ldots\a_r\bar\b_1\ldots\bar\b_s}{}^{a_1\ldots a_t\bar b_1\ldots\bar b_q}$ be a tensor, symmetric in $\alpha_1,\ldots,\alpha_r$ as well as in $\beta_1,\ldots,\beta_s$, and form the homogeneous vector-valued polynomial of type $(r,s)$ whose components are given by $$T^{a_1\ldots a_t\bar b_1\ldots\bar b_q}(\zeta,\bar \zeta):=
T_{\a_1\ldots\a_r\bar\b_1\ldots\bar\b_s}{}^{a_1\ldots a_t\bar b_1\ldots\bar b_q}\zeta^{\alpha_1}\ldots\zeta^{\alpha_r}\overline{\zeta^{\beta_1}}\ldots\overline{\zeta^{\beta_s}},
$$
where $\zeta=(\zeta^1,\ldots,\zeta^n)$. Then, the reader can check that the tensor is conformally flat if and only if all the polynomials $T^{a_1\ldots a_t\bar b_1\ldots\bar b_q}(\zeta,\bar \zeta)$ are divisible by the Hermitian form $g(\zeta,\bar\zeta):=g_{\alpha\bar\beta}\zeta^\alpha\overline{\zeta^\beta}$. Since
$\nabla g_{\a\bar\b}=0$ (see the second equation of
\eqref{eq-consymmetry}), it is clear that covariant derivatives of a
conformally flat tensor is again conformally flat.

We shall now restrict our attention to the case where the target
manifold $\hat M$ is the standard hyperquadric $Q_{l}^{N}$ in
$\bC\bP^{N+1}$, as defined by \eqref{hyperquadric}. (Thus, in what
follows the CR dimension of $\hat M=Q^N_l$ is $N$.) The crucial
property of the quadric that we shall use is that its Chern-Moser
pseudoconformal curvature tensor $\hat S_{A\bar B C\bar D}$ vanishes
identically. We shall need the following lemma. The corresponding
result in the strictly pseudoconvex case is proved, but not
explicitly stated in \cite{EHZ1}. Although the proof in the general
case is identical to that of the strictly pseudoconvex case, we give
it here for the convenience of reader.

\begin{Lem}\Label{l:codazzi}
  Let $M\subset \bC^{n+1}$ be a smooth
Levi-nondegenerate hypersurface of signature $l\leq n/2$, $f\colon
M\to Q_l^{N}\subset\bC^{N+1}$ a smooth CR mapping that is CR
transversal to $Q^{N}_l$ along $M$, and $\omega_\a{}^a{}_\b$ its
second fundamental form. Then, the covariant derivative tensor
$\omega_\a{}^a{}_\b{}_{;\bar\gamma}$ is conformally flat.
\end{Lem}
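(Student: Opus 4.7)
The plan is to mimic the derivation from the strictly pseudoconvex case in \cite{EHZ1}. The structural input is that the Chern--Moser pseudoconformal curvature tensor $\hat S_{A\bar B C\bar D}$ of $\hat M = Q^N_l$ vanishes identically, and the desired output is a Codazzi-type identity forcing conformal flatness of $\omega_\alpha{}^a{}_{\beta;\bar\gamma}$. The indefinite signature $l>0$ should only change signs in intermediate tensors, not the structural form of the argument.

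Concretely, I would compute $d\omega_\alpha{}^b$ in two ways. First, from Cartan's second structure equation on $\hat M$ pulled back to $M$ with $A=\alpha$, $B=b$,
\[
d\omega_\alpha{}^b = \omega_\alpha{}^\mu \wedge \omega_\mu{}^b + \omega_\alpha{}^c \wedge \omega_c{}^b + \hat\Omega_\alpha{}^b.
\]
Second, using the identity $\omega_\alpha{}^b = \omega_\alpha{}^b{}_\beta\,\theta^\beta$ from \eqref{eq-secform}, the covariant-differential definition \eqref{o-der}, and the formulas \eqref{Levi}, \eqref{eq-conmatrix} for $d\theta^\beta$, one can re-express $d\omega_\alpha{}^b$ in terms of the covariant derivatives $\omega_\alpha{}^b{}_{\beta;\mu}$, $\omega_\alpha{}^b{}_{\beta;\bar\mu}$, and $\omega_\alpha{}^b{}_{\beta;0}$. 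Equating the $\theta^\mu\wedge\theta^{\bar\gamma}$-type components of the two expressions yields an identity expressing (an appropriate symmetrization of) $\omega_\alpha{}^b{}_{\beta;\bar\gamma}$ as the sum of the mixed curvature component $\hat R_\alpha{}^b{}_{\mu\bar\gamma}$ of $\hat\Omega_\alpha{}^b$ and the $\theta^\mu\wedge\theta^{\bar\gamma}$ piece of $\omega_\alpha{}^c\wedge\omega_c{}^b$.

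It remains to check that both right-hand side contributions are conformally flat in $\alpha$, $\beta$, $\bar\gamma$. For $\hat R$, the vanishing of $\hat S$ on $Q^N_l$ forces $\hat R$ to equal the ``trace'' part of a pseudoconformal curvature, i.e.\ a linear combination of products of $\hat g_{A\bar B}$ with lower-order tensors (pseudohermitian Ricci and scalar). By the adaptedness of the coframe in Proposition~\ref{thm-admada}, $\hat g_{A\bar B}$ is block diagonal with tangential block $g_{\alpha\bar\beta}$, so $\hat g_{\alpha\bar b}=0$; only factors of $g_{\alpha\bar\gamma}$ and $g_{\mu\bar\gamma}$ survive in $\hat R_\alpha{}^b{}_{\mu\bar\gamma}$, giving conformal flatness. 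For the second contribution, one uses the symmetry \eqref{eq-consym2} to control $\omega_c{}^b$ modulo adapted combinations as essentially a conjugate of a second-fundamental-form-type quantity, so that its contraction with $\omega_\alpha{}^c = \omega_\alpha{}^c{}_\mu \theta^\mu$ is once again conformally flat by the same index argument. I expect the main technical obstacle to be precisely this bookkeeping, namely correctly identifying how the vanishing of $\hat S$ combines with the block-diagonal structure of $\hat g_{A\bar B}$ to produce the required pattern of $g$-contractions; no new conceptual ingredient arises beyond the strictly pseudoconvex calculation in \cite{EHZ1}.
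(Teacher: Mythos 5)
Your strategy is sound and does prove the lemma, but it is not the route the paper takes, and one step of your bookkeeping needs correcting. The paper works with the Chern--Moser \emph{pseudoconformal} connection rather than the Webster pseudohermitian one: the form $\hat\phi_\a{}^a$ pulls back to $\omega_\a{}^a{}_\b\theta^\b+\hat D_\a{}^a\theta$, and since the pseudoconformal structure equation for $d\hat\phi_\a{}^a$ has curvature term exactly $\hat S_{A\bar BC\bar D}=0$ on the quadric, identifying the coefficient of $\theta^\b\wedge\theta^{\bar\gamma}$ gives at once $\omega_\a{}^a{}_{\b;\bar\gamma}=i(g_{\a\bar\gamma}\hat D_\b{}^a+g_{\b\bar\gamma}\hat D_\a{}^a)$, which is visibly conformally flat. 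In your version the curvature that appears is the full pseudohermitian tensor $\hat R_\a{}^b{}_{\mu\bar\gamma}$, and you must then invoke the trace decomposition of $\hat R$ into $\hat S$ plus $g$-trace terms, together with $\hat g_{\a\bar b}=0$ in the adapted frame, to see that what survives is $\tfrac{1}{\hat n+2}\bigl(\hat R_\a{}^b g_{\mu\bar\gamma}+\hat R_\mu{}^b g_{\a\bar\gamma}\bigr)$ and hence conformally flat. That is correct and equivalent in substance --- the pseudoconformal connection is constructed precisely by absorbing those traces --- but the paper's route makes the conclusion immediate, with no decomposition step.

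The point to fix: the term $\omega_\a{}^c\wedge\omega_c{}^b$ does \emph{not} survive to the right-hand side of your Codazzi identity. When you expand $d(\omega_\a{}^b{}_\nu\theta^\nu)$ using the definition \eqref{o-der}, the normal-connection contribution $-\omega_\a{}^c{}_\nu\,\omega_c{}^b\wedge\theta^\nu=\omega_\a{}^c\wedge\omega_c{}^b$ reproduces exactly the quadratic term of the ambient structure equation, so the two cancel, and (since $\tau^b=0$ and $\theta^b=0$ on $M$ kill the torsion terms of $\hat\Omega_\a{}^b$) the resulting identity is simply $\omega_\a{}^b{}_{\nu;\bar\gamma}=-\hat R_\a{}^b{}_{\nu\bar\gamma}$ on $M$. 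This cancellation matters, because the justification you offer for that term --- that \eqref{eq-consym2} makes $\omega_c{}^b$ ``essentially a conjugate second-fundamental-form quantity'' --- would not hold up: the $(0,1)$-part of the normal connection $\omega_c{}^b$ is not determined by $\omega_\a{}^a{}_\b$, and the product $\omega_\a{}^c{}_\mu\,(\omega_c{}^b)_{\bar\gamma}$ carries no factor of $g_{\a\bar\gamma}$ or $g_{\mu\bar\gamma}$, so it is not conformally flat in general. You are saved by the fact that the term is absent, not by that argument.
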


\begin{proof} We shall work locally near a point $p\in M$ and use the setup introduced in Section \ref{s:prelim}. Let
\begin{equation}\Label{eq-CRpar}
(\omega,\omega^\a,\omega^{\bar\a},\phi,\phi_{\b}{}^{\a},\phi^{\a},
\phi^{\bar\a},\psi),\quad (\hat \omega,\hat {\omega}^A,\hat
\omega^{\bar A},\hat \phi,\hat \phi_{B}{}^{A},\hat \phi^{A}, \hat
\phi^{\bar A},\hat \psi)
\end{equation}
be the Chern--Moser pseudoconformal connections on the coframe
bundles $Y\to M$ and $\hat Y\to Q^{N}_l$, respectively, pulled back
to $M$ and $Q^{N}_l$ by (the completion of) our admissible coframes
$(\theta,\theta^\a,\theta^{\bar \a})$ and
$(\theta,\theta^A,\theta^{\bar A})$ (see \cite{EHZ1}, Section 3).
The latter connection is then pulled back to $M$ by the embedding
$f$. The $1$-form $\hat \phi_{\a}{}^{a}$ is of the form
\begin{equation}\Label{eq-phialphaa}
\hat\phi_\a{}^a= \omega_\a{}^a{}_\b\theta^\b+\hat D_\a{}^a\theta ,
\end{equation}
for some coefficients $\hat D_\a{}^a$ (see (3.3), (3.6) of \cite{We78} of Proposition 3.1 in
\cite{EHZ1}). By differentiating \eqref{eq-phialphaa}, using the
structure equation for $\hat \phi_{\a}{}^{a}$ ((3.12) in
\cite{EHZ1}; recall that the pseudoconformal curvature $\hat
S_{A\bar B C\bar D}$ of $Q^{N}_l$ vanishes identically), and
identifying the coefficients of
$\theta^\beta\wedge\theta^{\bar\gamma}$, we obtain
\begin{equation}\Label{eq-codazzi2}
\omega_\a{}^a{}_{\b;\bar\gamma}=i(g_{\a\bar\gamma}\hat D_\b{}^a +
g_{\b\bar\gamma}\hat D_\a{}^a),
\end{equation}
which proves the lemma. Here, to simplify the computation, we  choose an
adapted coframe near $p$, the point under study,  such that $\o_\a{}^\b(p)=\hat\o_a{}^b(p)=0$
(cf.\ e.g.\ Lemma 2.1 in \cite{Le88}). We will do the same in the following lemma, too.
\end{proof}

We shall also need the following result that describes how covariant
derivatives commute. A similar result (with a slightly stronger
conclusion) can be found in \cite{EHZ1} (Lemma 7.4). The proof given
there uses a result that does not immediately apply to our current
situation. We give therefore a (more or less) self-contained proof
here.

\begin{Lem}\Label{lem-covdercom}
 Let $M$, $f$, and
$\omega_\a{}^a{}_{\b}$ be as in Lemma $\ref{l:codazzi}$. Then, for
any $s\geq 2$, we have a relation
\begin{equation}\Label{eq-covdercom}
\omega_{\gamma_1}{}^a{}_{\gamma_2;\gamma_3\ldots\gamma_s\a\bar\b}\,
-\,
\omega_{\gamma_1}{}^a{}_{\gamma_2;\gamma_3\ldots\gamma_s\bar\b\a} =
C^a{}_{\gamma_1\ldots\gamma_s\a\bar\b}{}^{\mu_1\ldots\mu_s}{}_b \,
\omega_{\mu_1}{}^b{}_{\mu_2;\mu_3\ldots\mu_s} +
\,T_{\gamma_1\ldots\gamma_s\a\bar\b}{}^a,
\end{equation}
where the tensor
$C^a{}_{\gamma_1\ldots\gamma_s\a\bar\b}{}^{\mu_1\ldots\mu_s}{}_b$
depends only on $(\theta,\theta^\a)$  and the second fundamental
form $\omega_\a{}^a{}_\b$, and
$T_{\gamma_1\ldots\gamma_s\a\bar\b}{}^a$ is conformally flat.
\end{Lem}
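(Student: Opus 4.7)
The plan is to proceed by induction on $s$, using the Ricci-type commutation formula for the Webster--Tanaka pseudohermitian connection together with the Gauss equation for the CR embedding $M\hookrightarrow \hat M=Q^N_l$. The decisive structural input is that the Webster--Tanaka curvature tensor $\hat R_{A\bar B C\bar D}$ of the hyperquadric is conformally flat: since the Chern--Moser pseudoconformal tensor $\hat S_{A\bar B C\bar D}$ of $Q^N_l$ vanishes identically, the ordinary Webster curvature reduces to a linear combination of products of the ambient Levi form $\hat g_{A\bar B}$, and this property persists after restriction to $M$ in any of its index positions.

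For the base case $s=2$, I apply the standard commutator identity $[\nabla_\alpha,\nabla_{\bar\beta}]$ to the tensor $\omega_{\gamma_1}{}^a{}_{\gamma_2}$. This generates a curvature contribution $-R_{\gamma_i}{}^\mu{}_{\alpha\bar\beta}$ on each CR-tangent slot $\gamma_i$, a curvature contribution $\hat R^a{}_{b\alpha\bar\beta}$ on the normal slot, a term $ig_{\alpha\bar\beta}\nabla_0\omega_{\gamma_1}{}^a{}_{\gamma_2}$, and torsion contributions each of which either carries an explicit $g_{\alpha\bar\beta}$ factor or vanishes since $\tau^b=0$ by (\ref{eq-secform}). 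The normal-slot curvature $\hat R^a{}_{b\alpha\bar\beta}$ and the $ig_{\alpha\bar\beta}\nabla_0\omega$ term are conformally flat by inspection, the former by the structural observation above. The tangent-slot curvature $R_{\gamma_i}{}^\mu{}_{\alpha\bar\beta}$ of $M$ is, by the Gauss equation for the CR embedding $f$, the sum of $\hat R_{\gamma_i}{}^\mu{}_{\alpha\bar\beta}|_M$ (again conformally flat) and a term quadratic in the second fundamental form $\omega_\alpha{}^a{}_\beta$. Contracting this Gauss correction with the remaining free $\omega$-factor produces a term of the form $C\cdot\omega_{\mu_1}{}^b{}_{\mu_2}$ whose coefficient depends only on $(\theta,\theta^\alpha)$ and on $\omega_\alpha{}^a{}_\beta$, delivering the splitting asserted by the lemma.

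The inductive step is structurally identical: I apply the commutator to $T=\omega_{\gamma_1}{}^a{}_{\gamma_2;\gamma_3\ldots\gamma_s}$, treated as a tensor with $s$ CR-tangent lower indices and one normal upper index. Each curvature contribution decomposes into a conformally flat part (arising from the quadric's curvature on the normal slot, or from the ambient part of the Gauss equation on each tangent slot) and a contribution of the required form (from the $\omega\otimes\omega$ part of Gauss, contracted with $T$, which is a full $s$-th order covariant derivative of the second fundamental form). Whenever antiholomorphic covariant derivatives appear in intermediate reductions, Lemma~\ref{l:codazzi} converts them back to the holomorphic form modulo conformally flat corrections; the observation that covariant differentiation preserves conformal flatness (noted immediately before Lemma~\ref{l:codazzi}, since $\nabla g=0$) is used repeatedly, both to simplify these corrections and to keep track of the $T_{\gamma_1\ldots\gamma_s\alpha\bar\beta}{}^a$ piece.

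The main obstacle will be the bookkeeping needed to verify that no ``cross-term'' involving covariant derivatives of $\omega$ of intermediate order multiplied by other such factors survives in the final expression: only a single covariant derivative of $\omega$ of the full order $s$ should appear as a free factor, with coefficient depending only on $\omega_\alpha{}^a{}_\beta$ and $(\theta,\theta^\alpha)$. This works out because the Ricci-type commutator is linear in the tensor being differentiated, and the Gauss equation contributes at most two factors of the bare second fundamental form, never higher-order covariant derivatives of it; thus the coefficient $C$ remains a polynomial in the components of $\omega_\alpha{}^a{}_\beta$ with coefficients built from the Levi form.
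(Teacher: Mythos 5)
Your overall strategy---commute covariant derivatives via a Ricci-type identity and absorb the curvature contributions using the flatness of the quadric together with the Gauss equation---is in the same spirit as the paper's proof, and your accounting for the tangent slots and for the $ig_{\alpha\bar\beta}\nabla_0\omega$ term is fine. The genuine gap is in the normal-slot curvature. You dismiss $\hat R^a{}_{b\alpha\bar\beta}$ as conformally flat on the grounds that ``the ordinary Webster curvature reduces to a linear combination of products of the ambient Levi form.'' That premise is false: the vanishing of the Chern--Moser tensor $\hat S_{A\bar B C\bar D}$ only expresses $\hat R_{A\bar B C\bar D}$ as a combination of $\hat g$ with the pseudohermitian Ricci tensor $\hat P_{A\bar B}$ and the scalar curvature, and these are not CR invariants---they depend on the contact form $\hat\theta$, which here is an extension of an arbitrary $\theta$ admissible for the pair, not the standard form on $Q^N_l$. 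In the adapted frame, $\hat R_{a\bar b\alpha\bar\beta}$ contains the piece $\hat g_{a\bar b}\hat P_{\alpha\bar\beta}$, which carries no $g_{\alpha\bar\beta}$ factor (so is not conformally flat in the paper's sense) and, more importantly, is not visibly determined by $(\theta,\theta^\alpha)$ and $\omega_\alpha{}^a{}_\beta$ alone, since $\hat P_{\alpha\bar\beta}$ involves a trace of $\hat R$ over the normal directions. This is precisely the point the paper flags and resolves: it computes $d\hat\phi_b{}^a$ from the pseudoconformal structure equation (using the full vanishing of $\hat S$, not merely conformal flatness of $\hat R$), and shows via $\hat\phi_b{}^\mu=-\overline{\hat\phi_\mu{}^b}$ mod $\theta$ together with formula \eqref{eq-Cab} that the normal curvature contribution is quadratic in the second fundamental form. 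Without that step (in classical language, the Ricci equation of the embedding) you cannot conclude that $C^a{}_{\gamma_1\ldots\gamma_s\alpha\bar\beta}{}^{\mu_1\ldots\mu_s}{}_b$ depends \emph{only} on the coframe and the second fundamental form---and that dependence is the entire content of the lemma, since in its application (see \eqref{e:sub1} and \eqref{e:sub2}) the \emph{same} $C$ must govern two embeddings into quadrics of different dimensions.

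A secondary point: the paper does not induct on $s$ inside this lemma; it notes that both sides of \eqref{eq-covdercom} are tensorial, evaluates at a point in a coframe with $\omega_\alpha{}^\beta(p)=\hat\omega_a{}^b(p)=0$, and reads the commutator off as the coefficient of $\theta^\alpha\wedge\theta^{\bar\beta}$ in \eqref{eq-exp}. Your induction could be organized, but your appeal to Lemma \ref{l:codazzi} to ``convert antiholomorphic derivatives back'' in intermediate reductions overstates that lemma (it concerns only the first barred derivative $\omega_\alpha{}^a{}_{\beta;\bar\gamma}$ of the bare second fundamental form), and the bookkeeping you defer is exactly where the normal-curvature issue above resurfaces.
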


\begin{proof} We shall use the pseudoconformal connections in
\eqref{eq-CRpar}, as in the proof of Lemma \ref{l:codazzi} above. By
observing that the left hand side of the identity
\eqref{eq-covdercom} is a tensor, it is enough to show, for each
fixed point $p\in M$, the identity at $p$ with respect to any
particular choice of adapted coframe $(\theta,\theta^A)$ near $p$.
By making a suitable unitary change of coframe $\theta^\a\to
u_\b{}^\a \theta^\b$ and $\theta^a\to u_b{}^a \theta^b$ (in the
tangential and normal directions respectively), we may choose an
adapted coframe near $p$ such that $\o_\a{}^\b(p)=\hat\o_a{}^b(p)=0$
(cf.\ e.g.\ Lemma 2.1 in \cite{Le88}). By using \eqref{eq-cov1} and
\eqref{eq-phialphaa}, we conclude that, relative to this coframe,
the left hand side of \eqref{eq-covdercom} evaluated at $p$ is equal
to, modulo a conformally flat tensor, the coefficient in front of
$\theta^\a\wedge\theta^{\bar\b}$ in the expression
\begin{equation}\Label{eq-exp}
\sum_{j=1}^s
\omega_{\gamma_1}{}^a{}_{\gamma_2;\gamma_3\ldots\gamma_{j-1}\,\mu\,\gamma_{j+1}\gamma_s}\,
d\o_{\gamma_j}{}^\mu -
\omega_{\gamma_1}{}^b{}_{\gamma_2;\gamma_3\ldots\gamma_s}\,
d\hat\phi_b{}^a.
\end{equation}
The first term (i.e.\ the sum) in \eqref{eq-exp} is clearly of the
form on the right hand side of \eqref{eq-covdercom}.  Indeed, the coefficients $d\omega_{\gamma_j}{}^{\mu}$ corresponding to the $C^a{}_{\gamma_1\ldots\gamma_s\a\bar\b}{}^{\mu_1\ldots\mu_s}{}_b$ on the right in \eqref{eq-covdercom} only depend on the coframe $(\theta,\theta^\alpha)$ (and not even on the second fundamental form). It is not clear  that the corresponding coefficients $d\hat\phi_b{}^a$ in the second term of \eqref{eq-exp} depend only on the coframe and the second fundamental form.
To show that it does, we compute $d\hat\phi_b{}^a$ using the structure
equation (3.12) in \cite{EHZ1}, the vanishing of $\theta^a$ on $M$,
and the vanishing of $\hat\phi_\b{}^\a$ and $\hat\phi_b{}^a$ at $p$
modulo $\theta$ to obtain:
\begin{equation}\Label{eq-dphi}
d\hat\phi_b{}^a = \hat\phi_b{}^\mu \wedge \hat\phi_\mu{}^a  -
i\delta_b{}^a \hat\phi_\mu \wedge \theta^\mu  \mod \theta.
\end{equation}
Making use of the fact that $\hat\phi_b{}^\mu=-\overline{\hat\phi_\mu^b}\ \hbox{mod}(\theta)$, we see that
the first term on the right hand side of \eqref{eq-dphi} contributes
the term
$$
g^{\mu\bar\kappa}g_{b\bar
c}\omega_\mu{}^a{}_\gamma\omega_{\bar\kappa}{}^{\bar c}{}_{\bar
\nu},
$$
to the coefficient in front of $\theta^\a\wedge\theta^{\bar\b}$  in \eqref{eq-exp}. We observe that these only depend on the coframe and the second fundamental form.  For the second term on the right
in \eqref{eq-dphi}, we recall from \cite{EHZ1} (see equations (6.1)
and (6.8)) that, pulled back to $M$,
\begin{equation}
\hat \phi^\a=\phi^\a +C_\mu{}^\a\theta^\mu +F^\a\theta
\end{equation}
for some coefficients $C_\mu{}^\a$ and $F^\a$, where
\begin{equation}\Label{eq-Cab}
C_{\a\bar\b} =\frac{i
\omega_\mu{}^a{}_\a\,\omega^\mu{}_a{}_{\bar\b}}{n+2}
-\frac{ig_{\a\bar\b} \omega_\mu{}^a{}_\nu\,\omega^\mu{}_a{}^{\nu}}
{2(n+1)(n+2)}.
\end{equation}
In \eqref{eq-Cab}, we have used the vanishing of the curvature $\hat
S_{A\bar B\nu\bar\mu}$ of the target quadric. We observe that the
coefficients in front of $\theta^\a$ and $\theta^{\bar\b}$ in the
pulled back forms $\hat\phi^\gamma$ are uniquely determined by the coframe
$(\theta,\theta^\a)$ and the scalar products
$\omega_\a{}^a{}_\mu\omega_{\bar\b a\bar\nu}$. Hence, the second
term on the right in \eqref{eq-dphi}, substituted in \eqref{eq-exp}, contributes
only terms of the form that appear on the right hand side of
\eqref{eq-covdercom}. This completes the proof of Lemma
\ref{lem-covdercom}.
\end{proof}

The final ingredient we shall need for the proof of our main result
is the Gauss equation for the second fundamental form of the
embedding. For our purposes, we shall only need the following form
of it. A more general and precise version is stated and proved in
\cite{EHZ1} (Theorem 2.3; the lemma below corresponds to equation
(7.17) in \cite{EHZ1}). The proof in the Levi-nondegenerate case is
identical to that of the strictly pseudoconvex case in \cite{EHZ1},
and is therefore not repeated here.

\begin{Lem}\Label{lem-Gauss}
Let $M$, $f$, and
$\omega_\a{}^a{}_{\b}$ be as in Lemma $\ref{l:codazzi}$. Then,
\begin{equation}\Label{full-gauss}
0= S_{\a\bar \b\mu\bar\nu} + g_{a\bar b}\,\omega_\a{}^a{}_\mu\,
\omega_{\bar\b}{}^{\bar b}{}_{\bar\nu} + T_{\a\bar\b\mu\bar\nu},
\end{equation}
where $S_{\a\bar \b\mu\bar\nu}$ is the Chern-Moser pseudoconformal
curvature of $M$ and $T_{\a\bar\b\mu\bar\nu}$ is a conformally flat
tensor. \end{Lem}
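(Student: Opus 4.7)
The plan is to derive \eqref{full-gauss} by comparing the Chern--Moser structure equations on $M$ with those pulled back from the ambient hyperquadric $\hat M = Q^N_l$, exploiting the fact that the pseudoconformal curvature $\hat S_{A\bar B C\bar D}$ of $Q^N_l$ vanishes identically. The strategy parallels the classical Riemannian Gauss equation: the extrinsic quadratic expression $g_{a\bar b}\,\omega_\a{}^a{}_\mu\,\omega_{\bar\b}{}^{\bar b}{}_{\bar\nu}$ in the second fundamental form must balance the intrinsic curvature $S_{\a\bar\b\mu\bar\nu}$ of $M$, modulo conformally flat corrections.

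Concretely, I would first invoke the Chern--Moser curvature identity for $\hat\phi_\b{}^\a$ on $\hat M$ (equation (3.12) of \cite{EHZ1}). Since $\hat S \equiv 0$, after restricting to $M$ (so that $\theta^a = 0$) and splitting the summation index $A = (\gamma, c)$, this identity reduces to
\begin{equation*}
d\hat\phi_\b{}^\a = \hat\phi_\b{}^\gamma \wedge \hat\phi_\gamma{}^\a + \hat\phi_\b{}^c \wedge \hat\phi_c{}^\a + (\text{lower-order wedge terms}).
\end{equation*}
Using \eqref{eq-phialphaa} together with the symmetry relation \eqref{eq-consym2} (which identifies $\hat\phi_c{}^\a$ with the conjugate of $\hat\phi_\a{}^c$ up to raising/lowering by the Levi forms), the wedge $\hat\phi_\b{}^c \wedge \hat\phi_c{}^\a$ contributes, modulo $\theta$, the extrinsic term $g_{a\bar b}\,\omega_\b{}^a{}_\mu\,\omega_{\bar\gamma}{}^{\bar b}{}_{\bar\nu}\, g^{\bar\gamma\a}\,\theta^\mu\wedge\theta^{\bar\nu}$. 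Next, I would apply the intrinsic structure equation on $M$,
\begin{equation*}
d\phi_\b{}^\a = \phi_\b{}^\gamma \wedge \phi_\gamma{}^\a + S_\b{}^\a{}_{\mu\bar\nu}\,\theta^\mu\wedge\theta^{\bar\nu} + (\text{lower-order wedge terms}),
\end{equation*}
and invoke the relation between $\phi_\b{}^\a$ and $\hat\phi_\b{}^\a|_M$ (equations (6.1) and (6.8) of \cite{EHZ1}), whose discrepancy is controlled by \eqref{eq-Cab}. Subtracting the two structure equations, reading off the coefficient of $\theta^\mu\wedge\theta^{\bar\nu}$, and lowering the free index $\a$ by $g_{\a\bar\gamma}$ produces \eqref{full-gauss}.

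The main obstacle is the bookkeeping of the many correction terms. One must verify that every contribution arising from $\hat\phi_\b{}^\a|_M - \phi_\b{}^\a$ and from the lower-order pieces of the ambient Chern--Moser connection (involving $\hat\phi_\a$, $\hat\phi$, and $\hat\psi$) either cancels or collapses into a conformally flat tensor that can be absorbed into $T_{\a\bar\b\mu\bar\nu}$ without disturbing the intrinsic or extrinsic terms. The crucial feature enabling this, as observed in \cite{EHZ1}, is that the correction formulas (notably \eqref{eq-Cab}) are sufficiently structured to preserve conformal flatness after the algebraic manipulations, and that covariant derivatives of conformally flat tensors remain conformally flat because $\nabla g_{\a\bar\b} = 0$. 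The computation uses no Hermitian positivity of the Levi form, so it is insensitive to the signature $l$, and the strictly pseudoconvex derivation of \cite{EHZ1} culminating in their equation (7.17) carries over verbatim.
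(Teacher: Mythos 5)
Your outline is essentially the proof the paper relies on: the paper itself does not reproduce an argument but defers to Theorem 2.3 and equation (7.17) of \cite{EHZ1}, noting only that the strictly pseudoconvex derivation carries over verbatim to the Levi-nondegenerate case, and that derivation is exactly the structure-equation comparison you describe (vanishing of $\hat S$ on $Q^N_l$, restriction to $M$ with $\theta^a=0$, the wedge $\hat\phi_\b{}^c\wedge\hat\phi_c{}^\a$ producing the extrinsic term, and the discrepancy $\hat\phi_\b{}^\a|_M-\phi_\b{}^\a$ absorbed into the conformally flat remainder). Your closing observation that no positivity of the Levi form is used is precisely the justification the authors give for transferring the result from \cite{EHZ1}.
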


\section{Proof of Theorem $\ref{t:mainhyper}$}

The first step in the proof of Theorem $\ref{t:mainhyper}$ is the
following result concerning the second fundamental form and its
derivatives. The notation is the same as in the previous sections.
(For convenience of notation in the proof, we use $f$ and $\tilde f$
to denote the mappings, rather than $f_0$ and $f$ as in Theorem
$\ref{t:mainhyper}$.) To simplify the notation,  in what follows, we
will use the notation $\omega_\a^a$ , for $a\in \{1,\ldots,N-n\}$,
instead of $\omega_\a^{a+n}$ (and similarly for
$\tilde{\omega}_\alpha^a$.)

\begin{Thm}\Label{t:mainhypertech}
Let $M\subset \bC^{n+1}$ be a smooth
Levi-nondegenerate hypersurface of signature $l\leq n/2$ and $p\in
M$. Let $f\colon M\to Q^{N}_l$ and $\tilde f\colon M\to Q^{\tilde
N}_l$ be smooth CR mappings that are CR transversal to $Q^{N}_l$ at $f(p)$ and
$Q^{\tilde N}_l$ at $\tilde f(p)$, respectively. Suppose that $N-n<l$ and
$\tilde N\geq N$. Fix an admissible coframe $(\theta,\theta^\a)$ on
$M$ and choose corresponding coframes (as given by Proposition
    $\ref{thm-admada}$) $(\theta,\theta^A)_{A=1,\ldots,N}$ and $(\tilde \theta,\tilde
\theta^A)_{A=1,\ldots,\tilde N}$ on $Q^{N}_l$ and $Q^{\tilde N}_l$
adapted to $f(M)$ and $\tilde f(M)$, respectively. Denote by
$(\o_\a{}^a{}_\b)_{a=1,\ldots,N-n}$ and $(\tilde
\o_\a{}^a{}_\b)_{a=1,\ldots,\tilde N-n}$ the second fundamental
forms of $f$ and $\tilde f$, respectively, relative to these
coframes. Let $k\geq 2$ be an integer and assume that the spaces
$E_j(q)$  and $\tilde E_j(q)$, for $2\leq j\leq k$, are of constant
dimension for $q$ near $p$. Then, possibly after a unitary change of
$(\tilde\theta^a)$ near $p$,  the following holds for $2\le j\le k$:
\begin{equation}\Label{cov-eq}
\left\{
\begin{aligned}
& \tilde{\o}_{\gamma_1}{}^a{}_{\gamma_2;\gamma_3,\ldots,\gamma_j} =
\o_{\gamma_1}{}^a{}_{\gamma_2;\gamma_3,\ldots,\gamma_j},\quad a=1,\ldots N-n,\\
& \tilde{\o}_{\gamma_1}{}^i{}_{\gamma_2;\gamma_3,\ldots,\gamma_j}
=0,\qquad i=N-n+1,\ldots, \tilde N-n,
\end{aligned}
\right.
\end{equation}
\end{Thm}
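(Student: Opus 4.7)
My plan is induction on $j$ from $j=2$ to $j=k$, with a single unitary change of $(\tilde\theta^a)$ fixed at the base case.

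\textbf{Base case ($j=2$).} I would apply the Gauss equation (Lemma \ref{lem-Gauss}) to both $f$ and $\tilde f$ and subtract: the common Chern--Moser curvature of $M$ cancels, and since both target signatures equal $l$ the normal Levi forms are positive definite ($\hat l -l=0$, so $g_{a\bar b}=\delta_{a\bar b}$), yielding
\[
\sum_{a=1}^{N-n}\omega_\alpha{}^a{}_\mu\overline{\omega_\beta{}^a{}_\nu}-\sum_{a=1}^{\tilde N-n}\tilde\omega_\alpha{}^a{}_\mu\overline{\tilde\omega_\beta{}^a{}_\nu}=T^*_{\alpha\bar\beta\mu\bar\nu}
\]
with $T^*$ conformally flat. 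Contracting against $\zeta^\alpha\zeta^\mu\overline{\zeta^\beta\zeta^\nu}$ and writing $V^a(\zeta):=\omega_\alpha{}^a{}_\mu\zeta^\alpha\zeta^\mu$, the identity takes the form
\[
-\sum_{a=1}^{N-n}|V^a(\zeta)|^2+\sum_{a=1}^{\tilde N-n}|\tilde V^a(\zeta)|^2=(-A)\Bigl(-\sum_{i=1}^{l}|\zeta^i|^2+\sum_{j=l+1}^{n}|\zeta^j|^2\Bigr).
\]
Since $N-n<l\leq n/2$, Lemma \ref{l:main} forces $A\equiv 0$, so $\sum_a|V^a|^2=\sum_a|\tilde V^a|^2$ identically. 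Polarizing this and invoking a D'Angelo-type matching of Hermitian sums of holomorphic squares (smooth in $q$ by the constancy of $\dim E_2(q)$ and $\dim\tilde E_2(q)$), I obtain a smoothly-varying unitary change of $(\tilde\theta^a)$ after which $\tilde V^a=V^a$ for $a\leq N-n$ and $\tilde V^i=0$ for $i>N-n$, which is \eqref{cov-eq} at $j=2$. At the tensor level this yields $P:=\sum_a\omega_\alpha{}^a{}_\mu\overline{\omega_\beta{}^a{}_\nu}=\tilde P$.

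\textbf{Inductive step.} Fix the unitary from the base case and assume \eqref{cov-eq} for all $j'<j$. A key observation is that the normal-connection contributions to $\nabla P$ cancel: the anti-Hermitian relation $\omega_a{}^b+\overline{\omega_b{}^a}=0$, which follows from \eqref{eq-consym2} together with $g_{a\bar b}=\delta_{a\bar b}$, makes the two Leibniz-produced normal-connection terms in $\nabla(\sum_a\omega^a\overline{\omega^a})$ annihilate. Thus $\nabla^{j-2}\bar\nabla^{j-2}P=\nabla^{j-2}\bar\nabla^{j-2}\tilde P$, and expanding by Leibniz, Lemma \ref{l:codazzi} (every cross-Leibniz term carries an $\omega_{;\bar\gamma}$ factor, hence is conformally flat), and Lemma \ref{lem-covdercom} (derivative reorderings produce lower-order terms whose $C$-tensor depends only on the coframe and the second fundamental form, hence matches between $\omega$ and $\tilde\omega$ by inductive hypothesis modulo conformally flat corrections), one arrives at
\[
\sum_a\omega^a_{\alpha\mu;\gamma_3\ldots\gamma_j}\overline{\omega^a_{\beta\nu;\delta_3\ldots\delta_j}}-\sum_a\tilde\omega^a_{\alpha\mu;\gamma_3\ldots\gamma_j}\overline{\tilde\omega^a_{\beta\nu;\delta_3\ldots\delta_j}}=\mathcal R,
\]
with $\mathcal R$ conformally flat. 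Contracting with $\zeta$'s and setting $\delta_i=\gamma_i$ yields $\sum_a|W^a_j|^2-\sum_a|\tilde W^a_j|^2=A_jg$, and Lemma \ref{l:main} forces $A_j\equiv 0$. To match with the base-case unitary, I would in parallel apply only the unbarred derivatives to $P=\tilde P$: after substituting the inductive hypothesis this becomes $\sum_{a=1}^{N-n}(W^a_j-\tilde W^a_j)\overline{V^a}=C_jg$, and Lemma \ref{l:main0} (applicable since $N-n<n$) forces $C_j\equiv 0$. The constancy of $\dim E_2(q)$ then yields $\tilde W^a_j=W^a_j$ for $a\leq N-n$, after which the $\sum|W^a_j|^2$-identity forces $\tilde W^i_j=0$ for $i>N-n$, completing the induction.

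\textbf{Main obstacle.} The central difficulty is the inductive step's bookkeeping: verifying that every Leibniz cross-term, Codazzi factor, and commutator from Lemma \ref{lem-covdercom} collapses into a conformally flat contribution after substituting the inductive hypothesis---which works precisely because the $C$-tensor in Lemma \ref{lem-covdercom} depends only on the coframe and the second fundamental form, data that match between $\omega$ and $\tilde\omega$ by induction. A secondary subtlety is propagating the single base-case unitary through all derivative orders, which is handled by combining the quadratic identity (Lemma \ref{l:main}) with the bilinear one (Lemma \ref{l:main0}) and using the constancy of $\dim E_j(q)$ to pass from polynomial identities to pointwise tensor equalities.
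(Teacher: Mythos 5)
Your overall strategy---subtracting the Gauss equations, contracting with $\zeta$, and alternating Lemma \ref{l:main} (quadratic) with Lemma \ref{l:main0} (bilinear) under repeated covariant differentiation, with Lemmas \ref{l:codazzi} and \ref{lem-covdercom} absorbing the Codazzi and commutator terms---is the paper's. But there is a genuine gap in your inductive step: you claim that a \emph{single} unitary change of $(\tilde\theta^a)$, fixed at the base case $j=2$, suffices, and you deduce $\tilde W^a_j = W^a_j$ for all $a\leq N-n$ from the bilinear identity $\sum_{a=1}^{N-n}(W^a_j-\tilde W^a_j)\overline{V^a}=0$. That identity only pins down the components of $W_j-\tilde W_j$ along the span of the vectors $(\omega_\alpha{}^a{}_\beta)_a$, i.e.\ along $E_2$, whose dimension $e_2$ may be strictly smaller than $N-n$. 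For the components $a>e_2$ the pairing against $V^a$ gives nothing, and the subsequent quadratic identity $\sum_a|W^a_j|^2=\sum_a|\tilde W^a_j|^2$ only says the two residual collections of vectors differ by a unitary transformation of the orthogonal complement---it does not force them to be equal with the unitary already fixed.

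The paper closes this gap with two devices you omit: (i) an initial normalization arranging $\omega_{\gamma_1}{}^a{}_{\gamma_2;\gamma_3\ldots\gamma_j}=0$ for $a>e_j$, followed by a cascade of conjugate derivatives $\bar\lambda_1,\ldots,\bar\lambda_k$ that pairs the unknown difference against the conjugates of successively higher-order covariant derivatives (which span $E_3,\ldots,E_k$), extending the forced equality from $a\leq e_2$ up to $a\leq e_k$ via Lemma \ref{l:main0}; and (ii) at each inductive stage a \emph{new} unitary rotation of the remaining components $\tilde\theta^{e_k+1},\ldots,\tilde\theta^{\tilde N-n}$, which is harmless precisely because it acts on the orthogonal complement of all previously matched data and so preserves the lower-order equalities. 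Without (i) and (ii) your induction does not go through as written; with them, your argument becomes the paper's.
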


\begin{Rem}\Label{opendense}
 {\rm If $f$ and $\tilde f$ in Theorem \ref{t:mainhypertech} are assumed to be CR transversal to
 $Q^N_l$ and $Q^{\tilde N}_l$ at $f(p)$ and $\tilde f(p)$, respectively, for every $p\in M$, then
 for any $k\geq 2$  the set of points $p\in M$ such that the spaces $E_j(q)$  and $\tilde
E_j(q)$, for $2\leq j\leq k$, are of constant dimension for $q$ near
$p$ is open and dense in $M$.}
\end{Rem}

\begin{proof} Recall the normalization of the Levi forms given by Proposition
$\ref{thm-admada}$. We think of
$(\o_{\gamma_1}{}^a{}_{\gamma_2;\gamma_3,\ldots,\gamma_j})_{a=1\ldots,N-n}$
and
$(\tilde{\o}_{\gamma_1}{}^b{}_{\gamma_2;\gamma_3,\ldots,\gamma_j})_{b=1,\ldots,\tilde
N-n}$ as vectors in $\bC^{N-n}$ and $\bC^{\tilde N-n}$,
respectively. Let $e_j$ denote the dimension of $E_j(q)$, for $q$
near $p$ and $j=2,\ldots, k$. We first make an initial unitary
change of the $\theta^a$, $a=1,\ldots, N-n$, near $p$ such that, for
each $j=2,\ldots, k$, we have
\begin{equation}
\o_{\gamma_1}{}^a{}_{\gamma_2;\gamma_3,\ldots,\gamma_j}=0,\quad
a=e_j+1,\ldots, N-n.
\end{equation}
We then embed $\bC^{N-n}$ in $\bC^{\tilde N-n}$ as the subspace
$\{W\in \bC^{\tilde N-n}\colon W_i=0,\ i=N-n+1,\ldots, \tilde
N-n\}$, i.e.\ we extend
$\o_{\gamma_1}{}^a{}_{\gamma_2;\gamma_3,\ldots,\gamma_j}$ to be 0
for $a=N-n+1,\ldots, \tilde N-n$. The proof now consists of showing
that, possibly after a unitary change of the $\tilde\theta^a$, we
have
\begin{equation}
\tilde{\o}_{\gamma_1}{}^a{}_{\gamma_2;\gamma_3,\ldots,\gamma_j} =
\o_{\gamma_1}{}^a{}_{\gamma_2;\gamma_3,\ldots,\gamma_j},\quad
a=1,\ldots \tilde N-n. \end{equation}
If we subtract the Gauss equations for $\o_\a{}^a{}_\b$ given by  \eqref{full-gauss} from the
corresponding one for $\tilde
\o_\a{}^a{}_\b$, we obtain (since the pseudoconformal curvature
$S_{\a\bar \b\mu\bar\nu}$ in both equations is computed using the same coframe
$(\theta,\theta^\a)$)
\begin{equation}\Label{sub-gauss2}
-\sum_{a=1}^{N-n}\omega_\a{}^a{}_\mu\, \omega_{\bar\b}{}^{\bar
a}{}_{\bar\nu}+ \sum_{b=1}^{\tilde N-n}\tilde\omega_\a{}^b{}_\mu\,
\tilde\omega_{\bar\b}{}^{\bar b}{}_{\bar\nu}=
T'_{\a\bar\b\mu\bar\nu},
\end{equation}
where $T'_{\a\bar\b\mu\bar\nu}$ is a conformally flat tensor. For
brevity, we will write this simply as
\begin{equation}\Label{sub-gauss2half}
-\sum_{a=1}^{N-n}\omega_\a{}^a{}_\mu\, \omega_{\bar\b}{}^{\bar
a}{}_{\bar\nu}+ \sum_{b=1}^{\tilde N-n}\tilde\omega_\a{}^b{}_\mu\,
\tilde\omega_{\bar\b}{}^{\bar b}{}_{\bar\nu}= 0\ \mod \text{{\rm
CFT}}.
\end{equation}
Let $\zeta:=(\zeta^1,\ldots,\zeta^n)$, multiply \eqref{sub-gauss2}
by $\zeta^\a\overline{\zeta^\b}\zeta^\mu\overline{\zeta^\nu}$ and
sum. Since the right hand side of  \eqref{sub-gauss2} is conformally
flat, we obtain (see the beginning of Section \ref{s:SFF})
\begin{equation}\Label{sub-gauss3}
-\sum_{a=1}^{N-n}|\omega^a(\zeta)|^2+ \sum_{b=1}^{\tilde
N-n}|\tilde\omega^b(\zeta)|^2=
A(\zeta,\bar\zeta)\bigg(-\sum_{i=1}^{l}|\zeta^i|^2+
\sum_{j=l+1}^{n}|\zeta^j|^2\bigg),
\end{equation}
where $\omega^a(\zeta)=\omega_\a{}^a{}_\b\zeta^\a\zeta^b$, $\tilde
\omega^b(\zeta)=\tilde\omega_\a{}^b{}_\b\zeta^\a\zeta^b$, and
$A(\zeta,\bar\zeta)$ is a polynomial in $\zeta$ and $\bar\zeta$. Recall that $N-n<l$. By
Lemma \ref{l:main}, we conclude that $A\equiv 0$ and, hence,
\begin{equation}
\sum_{a=1}^{N-n}|\omega^a(\zeta)|^2=\sum_{b=1}^{\tilde
N-n}|\tilde\omega^b(\zeta)|^2,
\end{equation}
or equivalently, since $\omega_\a{}^a{}_\mu=0$ for $a=N-n+1,\ldots, \tilde N-n$,
\begin{equation}\Label{e:sffeq}
\sum_{a=1}^{\tilde N-n}\omega_\a{}^a{}_\mu\, \omega_{\bar\b}{}^{\bar
a}{}_{\bar\nu}= \sum_{b=1}^{\tilde N-n}\tilde\omega_\a{}^b{}_\mu\,
\tilde\omega_{\bar\b}{}^{\bar b}{}_{\bar\nu},
\end{equation}
i.e.\ the collection of vectors $(\omega_\a{}^a{}_\b)_{a=1,\ldots, \tilde N-n}$ and
$(\tilde \omega_\a{}^a{}_\b)_{a=1,\ldots, \tilde N-n}$ have the same scalar products with respect to
the standard scalar product in $\bC^{\tilde N-n}$. Hence, after a
unitary change of $\tilde \theta^a$ (smooth by the constant
dimension assumption on $E_2(q)$), we may assume that
\begin{equation}\Label{e:eq1}
\omega_\a{}^a{}_\b=\tilde\omega_\a{}^a{}_\b
\end{equation}
near $p$.

Next, we take a covariant derivative in the direction
$\theta^{\gamma_1}$ in the Gauss equations for $\omega_\a{}^a{}_\b$ and $\tilde\omega_\a{}^a{}_\b$
respectively, and then subtract the two resulting equations. Since
the covariant derivative of a conformally flat tensor stays
conformally flat and the covariant derivative of the curvature
tensor $S_{\a\bar\b\mu\bar\nu:\gamma}$ is the same in both
equations, we obtain
\begin{equation}\Label{e:sffeq1}
-\sum_{a=1}^{N-n}(\omega_\a{}^a{}_{\mu;\gamma_1}\,
\omega_{\bar\b}{}^{\bar a}{}_{\bar\nu}+\omega_\a{}^a{}_{\mu}\,
\omega_{\bar\b}{}^{\bar a}{}_{\bar\nu;\gamma_1})+ \sum_{b=1}^{\tilde
N-n}(\tilde\omega_\a{}^b{}_{\mu;\gamma_1}\,
\tilde\omega_{\bar\b}{}^{\bar
b}{}_{\bar\nu}+\tilde\omega_\a{}^b{}_\mu\,
\tilde\omega_{\bar\b}{}^{\bar b}{}_{\bar\nu;\gamma_1})=0\quad \mod
\CFT.
\end{equation}
By Lemma \ref{l:codazzi}, the covariant derivatives
$\omega_{\bar\b}{}^{\bar b}{}_{\bar\nu;\gamma_1}$ and
$\tilde\omega_{\bar\b}{}^{\bar b}{}_{\bar\nu;\gamma_1}$ are
conformally flat (since  $\omega_{\bar\b}{}^{\bar
a}{}_{\bar\nu;\gamma_1}=\overline{\omega_{\b}{}^{a}{}_{\nu;\bar\gamma_1}}$).
Hence, by using \eqref{e:eq1}, we obtain
\begin{equation}\Label{e:sffeq2}
\sum_{a=1}^{N-n}(\omega_\a{}^a{}_{\mu;\gamma_1}-\tilde\omega_\a{}^b{}_{\mu;\gamma_1})\,
\omega_{\bar\b}{}^{\bar a}{}_{\bar\nu}=0\quad \mod \CFT.
\end{equation}
Since $N-n<l\leq n/2$, we conclude, by using Lemma \ref{l:main0}
 in the same way we used Lemma \ref{l:main}
above, that in fact
\begin{equation}\Label{e:sffeq21}
\sum_{a=1}^{N-n}(\omega_\a{}^a{}_{\mu;\gamma_1}-\tilde\omega_\a{}^a{}_{\mu;\gamma_1})\,
\omega_{\bar\b}{}^{\bar a}{}_{\bar\nu}=0,
\end{equation}
which in turn implies
\begin{equation}\Label{e:eq2}
\tilde\omega_\a{}^a{}_{\mu;\gamma_1}=\omega_\a{}^a{}_{\mu;\gamma_1},\quad
a=1,\ldots, e_2.
\end{equation}
We now take two covariant derivatives in the directions
$\theta^{\gamma_1}$ and $\theta^{\bar\gamma_1}$ in the two Gauss
equations and subtract the resulting equations. By again using the
facts that covariant derivatives of the form
$\omega_{a}{}^{a}{}_{\b;\bar \gamma}$ are conformally flat and
covariant derivatives of conformally flat tensors are conformally
flat, we obtain
\begin{multline}\Label{e:sffeq3}
-\sum_{a=1}^{N-n}(\omega_\a{}^a{}_{\mu;\gamma_1\bar\gamma_1}\,
\omega_{\bar\b}{}^{\bar
a}{}_{\bar\nu}+\omega_\a{}^a{}_{\mu;\gamma_1}\,
\omega_{\bar\b}{}^{\bar a}{}_{\bar\nu;\bar\gamma_1})+\\
\sum_{b=1}^{\tilde
N-n}(\tilde\omega_\a{}^b{}_{\mu;\gamma_1\bar\gamma_1}\,
\tilde\omega_{\bar\b}{}^{\bar
b}{}_{\bar\nu}+\tilde\omega_\a{}^b{}_{\mu;\gamma_1}\,
\tilde\omega_{\bar\b}{}^{\bar b}{}_{\bar\nu;\bar\gamma_1})=0\quad
\mod \CFT.
\end{multline}
By Lemma \ref{lem-covdercom}, we have
\begin{equation}
\omega_\a{}^a{}_{\mu;\gamma_1\bar\gamma_1}=
\omega_\a{}^a{}_{\mu;\bar\gamma_1\gamma_1}+C^a{}_{\a\b\gamma_1\bar\gamma_1}{}^{\mu\nu}{}_b
\omega_\mu{}^b{}_{\nu}\ \mod\CFT,
\end{equation}
where the $C^a{}_{\a\b\gamma_1\bar\gamma_1}{}^{\mu\nu}{}_b$ only depend
on the coframe $(\theta,\theta^\a)$ and the second fundamental form
$\omega_\a{}^a{}_\b$. Since
$\omega_\a{}^a{}_{\mu;\bar\gamma_1\gamma_1}$ is conformally flat, we
conclude that
\begin{equation}\Label{e:sub1}
\omega_\a{}^a{}_{\mu;\gamma_1\bar\gamma_1}=C^a{}_{\a\b\gamma_1\bar\gamma_1}{}^{\mu\nu}{}_b
\omega_\mu{}^b{}_{\nu}\ \mod\CFT.
\end{equation}
The same argument applied to $\tilde
\omega_\a{}^a{}_{\mu;\gamma_1\bar\gamma_1}$, using the equality
\eqref{e:eq1}, shows that
\begin{equation}\Label{e:sub2}
\tilde \omega_\a{}^a{}_{\mu;\gamma_1\bar\gamma_1}=C^a{}_{\a\b\gamma_1\bar\gamma_1}{}^{\mu\nu}{}_b
\omega_\mu{}^b{}_{\nu}\ \mod\CFT
\end{equation}
with the same $C^a{}_{\a\b\gamma_1\bar\gamma_1}{}^{\mu\nu}{}_b$.
Substituting these identities back in \eqref{e:sffeq3},  we obtain
\begin{equation}\Label{e:sffeq4}
-\sum_{a=1}^{N-n}\omega_\a{}^a{}_{\mu;\gamma_1}\,
\omega_{\bar\b}{}^{\bar a}{}_{\bar\nu;\bar\gamma_1}+
\sum_{b=1}^{\tilde N-n}\tilde\omega_\a{}^b{}_{\mu;\gamma_1}\,
\tilde\omega_{\bar\b}{}^{\bar b}{}_{\bar\nu;\bar\gamma_1}=0\quad
\mod \CFT.
\end{equation}
By using Lemma \ref{l:main} as above, we find that in fact
\begin{equation}\Label{e:sffeq5}
-\sum_{a=1}^{N-n}\omega_\a{}^a{}_{\mu;\gamma_1}\,
\omega_{\bar\b}{}^{\bar a}{}_{\bar\nu;\bar\gamma_1}+
\sum_{b=1}^{\tilde N-n}\tilde\omega_\a{}^b{}_{\mu;\gamma_1}\,
\tilde\omega_{\bar\b}{}^{\bar b}{}_{\bar\nu;\bar\gamma_1}=0.
\end{equation}
Since we already have \eqref{e:eq2}, we conclude that there is a
unitary change of the remaining $\tilde
\theta^{e_2+1},\ldots,\tilde\theta^{\tilde N-n}$ such that
\begin{equation}\Label{e:eq3}
\tilde\omega_\a{}^a{}_{\mu;\gamma_1}=\omega_\a{}^a{}_{\mu;\gamma_1}.
\end{equation}
We notice that such a unitary change of the coframes does not affect \eqref{e:eq1}.

We now complete the proof of Theorem \ref{t:mainhypertech} by
induction, using the ideas above. We assume that
\begin{equation}\Label{e:induct}
\tilde{\o}_{\alpha}{}^a{}_{\beta;\gamma_1,\ldots,\gamma_j} =
\o_{\alpha}{}^a{}_{\beta;\gamma_1,\ldots,\gamma_j},\quad a=1,\ldots
\tilde N-n, \end{equation} holds for all $0\leq j\leq k$ with $k\geq
2$. We wish to prove that \eqref{e:induct} holds for all $0\leq
j\leq k+1$, after possibly another unitary change of the $\tilde
\theta^a$. We apply repeatedly covariant derivatives in the
directions $\theta^{\gamma_1},\ldots,\theta^{\gamma_{k+1}}$ to the
Gauss equations for $\o_{\alpha}{}^a{}_{\beta}$ and $\tilde
\o_{\alpha}{}^a{}_{\beta}$. We obtain, using as above the fact that
$\o_{\alpha}{}^a{}_{\beta;\bar \gamma}$ is conformally flat,
\begin{equation}\Label{dergauss1}
-S_{\a\bar \b\mu\bar\nu;\gamma_1,\ldots,\gamma_{k+1}} =
\sum_{a=1}^{N-n}\omega_\a{}^a{}_{\mu;\gamma_1,\ldots,\gamma_{k+1}}\,
\omega_{\bar\b}{}^{\bar a}{}_{\bar\nu} \ \mod\CFT,
\end{equation}
and
\begin{equation}\Label{dergauss2}
-S_{\a\bar \b\mu\bar\nu;\gamma_1,\ldots,\gamma_{k+1}}
=\sum_{a=1}^{\tilde N-n}\tilde
\omega_\a{}^a{}_{\mu;\gamma_1,\ldots,\gamma_{k+1}}\, \tilde
\omega_{\bar\b}{}^{\bar a}{}_{\bar\nu} \ \mod\CFT,
\end{equation}
Subtracting these two equations, using the fact that
$\o_{\alpha}{}^a{}_{\beta}=\tilde \o_{\alpha}{}^a{}_{\beta}$ and
Lemma \ref{l:main0} as above, we conclude that
\begin{equation}\Label{e:eq2-2}
\tilde\omega_\a{}^a{}_{\mu;\gamma_1\ldots\gamma_{k+1}}=\omega_\a{}^a{}_{\mu;\gamma_1
\ldots\gamma_{k+1}},\quad
a=1,\ldots, e_2.
\end{equation}
We now differentiate the two equations \eqref{dergauss1} and
\eqref{dergauss2} in the direction $\theta^{\bar \lambda_1}$. We
obtain
\begin{multline}
\Label{dergauss1-2}
 -S_{\a\bar
\b\mu\bar\nu;\gamma_1,\ldots,\gamma_{k+1}\bar\lambda_1}=\\
\sum_{a=1}^{N-n}\omega_\a{}^a{}_{\mu;\gamma_1,\ldots,\gamma_{k+1}\bar\lambda_1}\,
\omega_{\bar\b}{}^{\bar a}{}_{\bar\nu}+
\sum_{a=1}^{N-n}\omega_\a{}^a{}_{\mu;\gamma_1,\ldots,\gamma_{k+1}}\,
\omega_{\bar\b}{}^{\bar a}{}_{\bar\nu;\bar\lambda_1} \ \mod\CFT,
\end{multline}
and
\begin{multline}
\Label{dergauss2-2}
-S_{\a\bar
\b\mu\bar\nu;\gamma_1,\ldots,\gamma_{k+1}\bar\lambda_1}=\\
\sum_{a=1}^{\tilde N-n}\tilde
\omega_\a{}^a{}_{\mu;\gamma_1,\ldots,\gamma_{k+1}\bar\lambda_1}\,
\tilde \omega_{\bar\b}{}^{\bar a}{}_{\bar\nu}+ \sum_{a=1}^{\tilde
N-n}\tilde \omega_\a{}^a{}_{\mu;\gamma_1,\ldots,\gamma_{k+1}}\,
\tilde \omega_{\bar\b}{}^{\bar a}{}_{\bar\nu;\bar\lambda_1} \
\mod\CFT,
\end{multline}
We now use Lemma \ref{lem-covdercom} repeatedly to commute the
covariant derivative
$\omega_\a{}^a{}_{\mu;\gamma_1,\ldots,\gamma_{k+1}\bar\lambda_1}$ in
\eqref {dergauss1-2} to the conformally flat derivative
$\omega_\a{}^a{}_{\mu;\bar\lambda_1\gamma_1,\ldots,\gamma_{k+1}}$.
In doing so, we produce, according to Lemma \ref{lem-covdercom}, new
conformally flat terms as well as covariant derivatives of the form
\begin{equation}\Label{newterm1}
(C^a{}_{\a\mu\gamma_1\ldots\gamma_j\bar\lambda_1}{}^{\mu_1\ldots\mu_{j+2}}{}_b
\,
\omega_{\mu_1}{}^b{}_{\mu_2;\mu_3\ldots\mu_{j+2}})_{;\gamma_{j+2}\ldots\gamma_{k+1}}
\end{equation}
with $0\leq j\leq k-1$ and
\begin{equation}\Label{newterm2}
C^a{}_{\a\mu\gamma_1\ldots\gamma_{k}\bar\lambda_1}{}^{\mu_1\ldots\mu_{k+2}}{}_b
\, \omega_{\mu_1}{}^b{}_{\mu_2;\mu_3\ldots\mu_{k+2}}.
\end{equation}
We note that, since
$C^a{}_{\a\mu\gamma_1\ldots\gamma_j\bar\lambda_1}{}^{\mu_1\ldots\mu_{j+2}}{}_b$
only depends on the second fundamental form, all terms of the form
\eqref{newterm1} and \eqref{newterm2} depend only on covariant
derivatives $\omega_\a{}^a{}_{\mu;\gamma_1,\ldots,\gamma_{j}}$ up to
order at most $j=k$. If we repeat this procedure with \eqref
{dergauss2-2}, then the new terms that appear are either conformally
flat or, by the induction hypothesis, precisely the same terms (of
the form \eqref{newterm1} and \eqref{newterm1}) that appear in
\eqref{dergauss1-2}. Hence, when we subtract the two equations
\eqref{dergauss1-2} and \eqref{dergauss2-2} we obtain, using again
the fact that $\tilde \omega_{\bar\b}{}^{\bar
a}{}_{\bar\nu;\bar\lambda_1}=\omega_{\bar\b}{}^{\bar
a}{}_{\bar\nu;\bar\lambda_1}$,
\begin{equation}
\sum_{a=1}^{N-n}(\omega_\a{}^a{}_{\mu;\gamma_1,\ldots,\gamma_{k+1}}-\tilde
\omega_\a{}^a{}_{\mu;\gamma_1,\ldots,\gamma_{k+1}})\,
\omega_{\bar\b}{}^{\bar a}{}_{\bar\nu;\bar\lambda_1} =0. \ \mod\CFT.
\end{equation}
Now, by using Lemma \ref{l:main0} as above we conclude that
\begin{equation}\Label{e:eq2-3}
\tilde\omega_\a{}^a{}_{\mu;\gamma_1\ldots\gamma_{k+1}}=\omega_\a{}^a{}_{\mu;\gamma_1
\ldots\gamma_{k+1}},\quad a=1,\ldots, e_3.
\end{equation}
We now apply repeated derivations in the directions $\theta^{\bar
\lambda_2},\ldots,\theta^{\bar\lambda_{k}}$ to the two equations
\eqref{dergauss1-2} and \eqref{dergauss2-2} and repeat the procedure
and arguments above.  The conclusion is that
\begin{equation}\Label{e:eq2-k}
\tilde\omega_\a{}^a{}_{\mu;\gamma_1\ldots\gamma_{k+1}}=\omega_\a{}^a{}_{\mu;\gamma_1
\ldots\gamma_{k+1}},\quad a=1,\ldots, e_k.
\end{equation}
The details of this are left to the reader.

 In the final step, we
apply a derivation in the direction $\theta^{\bar\lambda_{k+1}}$.
After repeating the procedure above and subtracting the resulting
equations we obtain
\begin{equation}
-\sum_{a=1}^{N-n}\omega_\a{}^a{}_{\mu;\gamma_1,\ldots,\gamma_{k+1}}\,
\omega_{\bar\b}{}^{\bar a}{}_{\bar\nu;\bar\lambda_1\ldots
\bar\lambda_{k+1}}+\sum_{a=1}^{N-n}\tilde
\omega_\a{}^a{}_{\mu;\gamma_1,\ldots,\gamma_{k+1}}\, \tilde
\omega_{\bar\b}{}^{\bar a}{}_{\bar\nu;\bar\lambda_1\ldots
\bar\lambda_{k+1}} =0 \ \mod\CFT.
\end{equation}
We apply Lemma \ref{l:main} as above and conclude that in fact
\begin{equation}
\sum_{a=1}^{N-n}\omega_\a{}^a{}_{\mu;\gamma_1,\ldots,\gamma_{k+1}}\,
\omega_{\bar\b}{}^{\bar a}{}_{\bar\nu;\bar\lambda_1\ldots
\bar\lambda_{k+1}}=\sum_{a=1}^{N-n}\tilde
\omega_\a{}^a{}_{\mu;\gamma_1,\ldots,\gamma_{k+1}}\, \tilde
\omega_{\bar\b}{}^{\bar a}{}_{\bar\nu;\bar\lambda_1\ldots
\bar\lambda_{k+1}} =0.
\end{equation}
It follows, by using also \eqref{e:eq2-k}, that there is a unitary
change of the $\tilde \theta^{a}$, with $a=e_k+1,\ldots,\tilde N-n$,
such that
$$
\tilde \omega_\a{}^a{}_{\mu;\gamma_1,\ldots,\gamma_{k+1}}=
\omega_\a{}^a{}_{\mu;\gamma_1,\ldots,\gamma_{k+1}}.
$$
This completes the induction and, thus, the proof of Theorem
\ref{t:mainhypertech}.
\end{proof}

In view of the definition of constant $(k,s)$-degeneracy given at the end of Section \ref{s:prelim} and Remark \ref{opendense}, we obtain
the following  as a corollary of Theorem \ref{t:mainhypertech}:

\begin{Cor}\Label{c:techhyper}
Let $M$, $p$, $f$, $\tilde f$ be as in Theorem
$\ref{t:mainhypertech}$. Then, there is an open neighborhood $U$ of $p$ in $M$ such that for $q$ in an open dense subset of $U$,
the mapping $\tilde f$ is constantly $(k,s)$-degenerate at $q$ for some $k\geq 2$
and some $s$ with $\tilde N-s\leq N$.
\end{Cor}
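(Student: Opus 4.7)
The strategy is to combine a stabilization argument for the increasing chain of subspaces $\tilde E_j(q) \subset \bC^{\tilde N - n}$ with the dimensional restriction supplied by Theorem~\ref{t:mainhypertech}. The idea is that at a generic point the chain $\tilde E_j(q)$ must stabilize for dimension reasons, and Theorem~\ref{t:mainhypertech} forces the stabilized dimension to be at most $N-n$.

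First I would shrink to an open neighborhood $U$ of $p$ on which both $f$ and $\tilde f$ remain CR transversal to their respective targets, so that the hypotheses of Theorem~\ref{t:mainhypertech} are in force at every point of $U$. For each fixed integer $k_0 \geq 2$, by lower semicontinuity of $\dim E_j(q)$ and $\dim \tilde E_j(q)$, the set of points $q \in U$ near which both dimensions are locally constant for every $2 \leq j \leq k_0$ is open and dense; this is the content of Remark~\ref{opendense}. Because $E_j(q)$ and $\tilde E_j(q)$ are increasing chains of subspaces of $\bC^{N-n}$ and $\bC^{\tilde N-n}$ respectively, their dimensions can strictly increase at most $N-n$ and $\tilde N - n$ times. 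Choosing $k_0$ larger than both of these numbers, we obtain an open dense subset $U^\circ \subset U$ on which both chains have locally constant dimensions and have already stabilized, i.e.\ $E_{k_0+1}(q) = E_{k_0}(q)$ and $\tilde E_{k_0+1}(q) = \tilde E_{k_0}(q)$ on a neighborhood of each $q \in U^\circ$.

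Fix $q \in U^\circ$, let $k \geq 2$ be the smallest integer for which $\tilde E_{k+1}(q) = \tilde E_k(q)$ and $\dim \tilde E_k$ is locally constant in a neighborhood of $q$, and set $s := \tilde N - n - \dim \tilde E_k(q)$. By definition, $\tilde f$ is constantly $(k,s)$-degenerate at $q$. It remains to check the inequality $\tilde N - s \leq N$, equivalently $\dim \tilde E_k(q) \leq N - n$. Since the constancy hypotheses on $\dim E_j$ and $\dim \tilde E_j$ for $2 \leq j \leq k$ are satisfied near $q$, Theorem~\ref{t:mainhypertech} applies and supplies a unitary change of the coframe $(\tilde\theta^a)$ after which
\begin{equation*}
\tilde\omega_{\gamma_1}{}^i{}_{\gamma_2;\gamma_3\ldots\gamma_j} = 0, \qquad i = N-n+1,\ldots,\tilde N - n, \quad 2 \leq j \leq k.
\end{equation*}
Since a unitary change preserves the span, $\tilde E_k(q)$ is forced into the $(N-n)$-dimensional coordinate subspace of $\bC^{\tilde N - n}$ where the last $\tilde N - N$ coordinates vanish; hence $\dim \tilde E_k(q) \leq N - n$, which is what was needed.

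The only delicate point, and the one I would be most careful about, is purely a matter of bookkeeping: Theorem~\ref{t:mainhypertech} requires constancy of the dimensions of the $E_j$ and $\tilde E_j$ up to the same order $k$ at which one wishes to read off constant $(k,s)$-degeneracy, while the definition of constant $(k,s)$-degeneracy demands stabilization of the chain at that same index. Both conditions are arranged simultaneously by first restricting to the open dense set $U^\circ$ on which \emph{all} relevant dimensions (finite in number, since the chains stabilize after at most $\tilde N - n$ strict increases) are locally constant. Beyond this, the argument is a direct application of the previous theorem and presents no analytic obstacle.
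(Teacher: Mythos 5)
Your proposal is correct and follows exactly the route the paper intends: the paper gives no explicit proof, stating only that the corollary follows from the definition of constant $(k,s)$-degeneracy, Remark \ref{opendense}, and Theorem \ref{t:mainhypertech}, and your stabilization-by-pigeonhole argument together with the vanishing of $\tilde{\o}_{\gamma_1}{}^i{}_{\gamma_2;\gamma_3\ldots\gamma_j}$ for $i>N-n$ is precisely the missing bookkeeping. The only (harmless) imprecision is the exact choice of $k_0$; one should take $k_0$ strictly larger than $\max(N-n,\tilde N-n)+2$ so that the pigeonhole applies to the chain starting at index $2$.
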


To prove Theorem \ref{t:mainhyper}, we also need the
following result (Theorem \ref{thm-main2} below). The corresponding result in the strictly
pseudoconvex case was stated and proved in \cite{EHZ1} (Theorem 2.2
in that paper). The proof in the Levi nondegenerate case is
identical, and is therefore not reproduced here. We embed $\bC^{N+1}$ into $\bC\bP^{N+1}$ in the standard way, i.e. as the open subset $\{[z_0\colon z_1\colon\ldots\colon z_{N+1}]\colon z_0\neq 0\}$.

\begin{Thm}\Label{thm-main2}
 Let $M\subset \bC^{n+1}$ be a real-analytic
connected Levi-nondegenerate hypersurface of signature $l\leq n/2$
and $f\colon M\to Q^{N}_l\cap\bC^{N+1}\subset\bC^{N+1}$ a CR mapping that
is CR transversal to $Q^{N}_l\cap\bC^{N+1}$ at $f(p)$ for $p\in M$. Assume that $f$ is constantly $(k,s)$-degenerate near $p$ for some $k$ and $s$. If $N-n-s<n$, then there is an open neighborhood $V$ of $p$ in $M$ such that $f(V)$ is contained in the
intersection of $Q^{N}_l\cap\bC^{N+1}$ with a complex plane $P\subset \bC^{N+1}$
of codimension $s$.
\end{Thm}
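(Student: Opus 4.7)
The strategy is to show that the orthogonal complement of $E_k(q)$, an $s$-dimensional subspace of the CR-normal space to $f(M)$ in $Q^N_l$ at each $q$ near $p$, is a parallel subbundle of the CR-normal bundle of $f(M)$, and then invoke the pseudoconformal flatness of $Q^N_l$ to conclude that this parallel subbundle corresponds to a fixed codimension-$s$ complex affine plane $P\subset \bC^{N+1}$ containing $f(V)$.

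First, by Proposition \ref{thm-admada}, choose an admissible adapted coframe $(\theta,\theta^A)$ on $Q^N_l$ near $f(p)$. Since $E_k(q)$ has constant dimension $N-n-s$ near $p$, a smooth unitary rotation of the normal directions $(\theta^a)_{a=n+1,\ldots,N}$ arranges that $E_k$ is spanned by the first $N-n-s$ of these vectors throughout a neighborhood of $p$. With this normalization, $\omega_\alpha{}^a{}_{\beta;\gamma_1\ldots\gamma_j}\equiv 0$ for all $2\leq j\leq k$ and all excess indices $a$ with $a>N-n-s$, while the hypothesis $E_{k+1}(q)=E_k(q)$ gives the analogous vanishing at order $j=k+1$.

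Next, I would propagate this vanishing to all orders $j\geq 2$ by induction on $j$. The inductive step uses Lemma \ref{lem-covdercom} to rewrite any mixed covariant derivative of order $j+1$ in terms of covariant derivatives of order at most $j$ modulo conformally flat remainders, and Lemma \ref{l:codazzi} to absorb any derivative in a barred direction into the conformally flat part. Multiplying the resulting tensor identities by $\zeta^{\gamma_1}\cdots\zeta^{\gamma_j}\overline{\zeta^{\beta}}\cdots$ as in the proof of Theorem \ref{t:mainhypertech} converts the conclusions into equations of the form on the right-hand side of \eqref{e:basiceq}, to which Lemma \ref{l:main0} applies because $N-n-s<n$; this removes the conformally flat ambiguity and delivers pointwise vanishing of $\omega_\alpha{}^a{}_{\beta;\gamma_1\ldots\gamma_{j+1}}$ for excess $a$.

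Finally, once all iterated covariant derivatives of the second fundamental form vanish in the excess normal directions, the formula \eqref{eq-phialphaa} and the structure equations for the Chern-Moser pseudoconformal connection on $Q^N_l$ (whose curvature $\hat S_{A\bar B C\bar D}$ vanishes identically) show that, for excess $a$, the forms $\hat\phi_\alpha{}^a$ pull back to multiples of $\theta$ on $M$ and that the rank-$s$ subbundle perpendicular to $E_k$ is preserved by the pseudoconformal connection along $f(M)$. Since $Q^N_l$ is pseudoconformally flat, this parallel subbundle extends to a parallel distribution in a neighborhood of $f(p)$, whose leaves are intersections of $Q^N_l$ with codimension-$s$ complex affine planes of $\bC^{N+1}$; the leaf through $f(p)$ is the desired $P$, and $f(V)\subset Q^N_l\cap P$ follows for a small neighborhood $V$ of $p$.

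The main obstacle is the induction in the second step: one must carefully track how the conformally flat terms produced each time Lemma \ref{lem-covdercom} is invoked feed back into the Gauss identity \eqref{full-gauss} and its derivatives, and verify that the bound $N-n-s<n$ is applied each time before upgrading a $\!\mod \CFT$ conclusion to a pointwise one. This is exactly the argument carried out in Theorem 2.2 of \cite{EHZ1} in the strictly pseudoconvex setting, and, as the authors note, the Levi-nondegenerate extension is formally identical.
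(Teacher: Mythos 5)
The paper itself contains no proof of Theorem \ref{thm-main2}: the authors state that the argument is identical to that of Theorem 2.2 in \cite{EHZ1} and do not reproduce it, so there is nothing internal to check your write-up against line by line. Your outline does follow the moving-frames strategy of \cite{EHZ1}, but two points need attention. First, your second step is unnecessary and misplaces the key hypothesis. Once the normal coframe is rotated so that $E_k$ is spanned by the first $N-n-s$ normal directions, the vanishing of $\omega_{\gamma_1}{}^a{}_{\gamma_2;\gamma_3\ldots\gamma_j}$ for excess $a$ and \emph{all} $j$ is automatic: since $E_{k+1}(q)=E_k(q)$ on a neighborhood and $E_k$ has constant rank, one gets $E_j(q)=E_k(q)$ for every $j\geq k$ (the order-$(j+1)$ derivatives are covariant derivatives of the order-$j$ ones, which by induction are smooth combinations of the spanning set of $E_k$, and differentiating lands back in $E_{k+1}=E_k$), and the excess components of anything in $E_k$ are zero in the normalized frame. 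No appeal to Lemma \ref{l:main0}, and no use of $N-n-s<n$, occurs at that stage.

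Second --- and this is the genuine gap --- your final assertion that ``the rank-$s$ subbundle perpendicular to $E_k$ is preserved by the pseudoconformal connection'' is the entire content of the theorem, and it is precisely where the hypothesis $N-n-s<n$ must enter. Working with Chern--Moser frames $(Z_0,Z_A,Z_{N+1})$ of $\bC^{N+2}$ adapted to the cone over $Q^N_l$, one has to show that $dZ_a$ lies in the span of the excess $Z_b$ for each excess $a$, i.e.\ that \emph{all} the relevant Maurer--Cartan components vanish: not only the tangential ones $\hat\phi_\alpha{}^a$ (which do vanish, together with $\hat D_\alpha{}^a$ by \eqref{eq-codazzi2}, from what precedes), but also the mixed normal components $\hat\phi_b{}^a$ with $b$ non-excess and the translational components. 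Killing those requires feeding the structure equations back through Lemma \ref{l:main0}, where the sum runs over the $N-n-s$ non-excess normal indices --- this is the one place the bound $N-n-s<n$ is indispensable, and your sketch neither performs this computation nor identifies it as the locus of the hypothesis. Note also that the conclusion is a single fixed plane, obtained from a constant $s$-dimensional subspace of $\bC^{N+2}$ via the Hermitian form; the ``leaves of a parallel distribution'' picture is not what is needed. As written, the proposal defers the decisive step to \cite{EHZ1} just as the paper does, so it cannot be regarded as a self-contained proof.
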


We are now in a position to prove Theorem \ref{t:mainhyper}.

\begin{proof}[Proof of Theorem $\ref{t:mainhyper}$] We first observe that it suffices to show that $f=T\circ L\circ f_0$, where $T$ and $L$ are as in the statement of the theorem, in an open neighborhood of any point $p\in M$. Indeed, if $f=T\circ L\circ f_0$ holds on a nonempty open subset of $U$, then it holds on all of $U$ since both sides are holomorphic mappings $U\to \bC\bP^{N+1}$ and $U$ is connected.

Let $\Pi\subset \bC\bP^{N+1}$ be the hyperplane at infinity, i.e.\ given in homogeneous coordinates by $z_0=0$, and observe that $\Pi$ is biholomorphically equivalent to $\bC\bP^N$. We observe that $Q^N_l\cap \Pi$ is a real hypersurface isomorphic to the hyperquadric $Q^{N-1}_{l-1}$ and, hence, has signature $l-1$. Since $f(U)\not\subset Q^N_l$, it follows from Theorem 5.7 in \cite{BERtrans} that $f(U)$ cannot be contained in $\Pi$. For, if it were, then $f(M)$ would be contained in $Q^N_l\cap\Pi\cong Q^{N-1}_{l-1}$ and $f(U)\not\subset Q^N_l\cap\Pi$, contradicting the conclusion of Theorem 5.7 in \cite{BERtrans}. We claim that there is a dense relatively open subset $M_0\subset M$ such that $f(p)\subset Q^N_l\cap \bC^{N+1}$ and $f$ is CR transversal to $Q^N_l$ at $f(p)$ for every $p\in M_0$. Indeed, the existence of $M_0$ follows from the remarks above and Theorem 1.1 in \cite{BERtrans}, since $M'=Q^N_l$ satisfies condition (1.2) of that theorem ($l\leq n/2\leq n-1$ for $n\geq 1$). A similar argument applies to the mapping $f_0$ and after restricting $M_0$ if necessary, we may assume that $f_0(p)\in Q^{N_0}_l\cap \bC^{N_0+1}$,  $f(p)\in Q^{N}_l\cap \bC^{N+1}$ and that both maps are CR transversal to their target manifolds for every $p\in M_0$.

By Corollary
\ref{c:techhyper} (with the roles of $f$ and $\tilde f$ played by $f_0$ and $f$, respectively), we conclude that there is a nonempty open subset of $M_0$ on which $f$ is constantly $(k,s)$-degenerate for some $k$ and $s$ with $N-s<N_0$. Since $N-n-s\leq N_0-n<l\leq n/2<n$, Theorem \ref{thm-main2} implies that there exists a point $p_0\in M_0$ and an open neighborhood $V$ of $p_0$ in $M_0$ such that $f(V)$ is contained
in the intersection of $Q^{N}_l\cap \bC^{N+1}$ with a complex plane $P\subset
\bC^{N+1}$ of codimension $s$. Since  $N-s<N_0$, $P$ is of
dimension $\leq N_0+1$. Without loss of  generality (by enlarging $P$ if necessary), we may assume that the dimension of $P$ equals $N_0+1$.
Since $f$ is CR transversal to
$Q^{N}_l$ at $f(p_0)$, the plane $P$ must be transversal to $Q^{N}_l$ at $f(p_0)$. The
intersection $Q^{N}_l\cap P$ is again a hyperquadric (inside $P$)
and its signature cannot exceed $l$. Since $f\colon V\to Q^{N}_l\cap
P$ is a CR mapping that is CR transversal to $Q^{N}_l\cap P$, we
conclude that $Q^{N}_l\cap P$ is a hyperquadric whose signature cannot be less than $l$, and hence the signature of $Q^{N}_l\cap P$ equals $l$. Let $\tilde P$ be the projective plane in $\bC\bP^{N+1}$ whose restriction to $\bC^{N+1}$ is $P$. Also, let $\tilde P_0$ denote the projective plane of dimension $N_0+1$ given by
$$\tilde P_0:=\{[z_0\colon z_1\colon \ldots\colon z_{N+1}]\in \bC\bP^{N+1}\colon z_{N_0+2}=\ldots =z_{N+1}=0\}.$$
Since both intersections $\tilde P\cap Q^N_l$ and $\tilde P_0\cap Q^N_l$ are nondegenerate quadrics of signature $l$, there exists (by elementary linear algebra) an automorphism  $S\in \Aut(Q^N_l)$ such that $S(\tilde P)=\tilde P_0$.  Hence, the holomorphic mappings $S\circ f$ and $L\circ f_0$, where $L$ is the linear embedding given by \eqref{linear}, both send $V$ (by further shrinking $V$ if necessary) into the nondegenerate quadric of signature $l$ in the $(N_0+1)$-dimensional subspace $ \{z_{N_0+2}=\ldots= z_{N+1}=0\}\subset \bC^{N+1}$, which we may identify with the hyperquadric $Q^{N_0}_l\cap \bC^{N_0+1}$ in $\bC^{N_0+1}$.
Now, since $(N_0-n)+(N_0-n)<2l\leq n$ and $M$ is not locally equivalent to the quadric $Q^n_{n/2}$, Theorem 1.6  in \cite{EHZ2} implies that there is an automorphism $T'\in \Aut(Q^{N_0}_l)$  such that $S\circ f= L\circ T'\circ f_0$. Hence, near $p_0$, we have $f=S^{-1}\circ L\circ T'\circ f_0$. The mapping $S^{-1}\circ L\circ T'$ is a holomorphic embedding $\bC\bP^{N_0+1}\to \bC\bP^{N+1}$ that sends $Q^{N_0}_l$ into $Q^{N}_l$. It follows from the hypotheses that the signature $l$ of the quadric $Q^{N_0}_l$ cannot be $N_0/2$ and, hence, it follows from \cite{BH} that there is an automorphism $T\in\Aut(Q^N_l)$ such that $S^{-1}\circ L\circ T'=T\circ L$. Consequently, the identity $f=T\circ L\circ f_0$ holds in a neighborhood of $p_0$ in $\bC^{n+1}$.  This completes the proof of Theorem \ref{t:mainhyper} in view of the remark at the beginning of the proof.\end{proof}

\begin{Rem} {\rm The proof of Theorem \ref{t:mainhyper} could also
be completed without reference to \cite{EHZ2} by suitably modifying
the proof of Theorem 7.2 in \cite{EHZ1} to the Levi nondegenerate
situation. }
\end{Rem}

\end{document}